\documentclass[11pt, reqno, letterpaper]{amsart}

\usepackage{esint}
\usepackage{amssymb}

\usepackage[
hypertexnames=false, colorlinks, 
linkcolor=black,citecolor=black,urlcolor=black, linktocpage=true
]%
{hyperref} 
\hypersetup{bookmarksdepth=3}

	\setlength{\textwidth}{15.5cm}			  %
	\setlength{\textheight}{22cm}			  %
	\setlength{\topmargin}{-.5cm}			  %
	\setlength{\oddsidemargin}{6mm}			  %
	\setlength{\evensidemargin}{6mm}		  %
	\setlength{\abovedisplayskip}{3mm}		  %
	\setlength{\belowdisplayskip}{3mm}		  %
	\setlength{\abovedisplayshortskip}{0mm}	  %
	\setlength{\belowdisplayshortskip}{2mm}	  %
	\setlength{\baselineskip}{12pt}			  %
	\setlength{\normalbaselineskip}{12pt}	  %
	\normalbaselines						  %

\usepackage{graphicx}
\usepackage{
mathrsfs}

\newtheorem{thm}{Theorem}[section]
\newtheorem{lm}[thm]{Lemma}

\newtheorem{prop}[thm]{Proposition}

\theoremstyle{definition}
\newtheorem{df}[thm]{Definition}
\newtheorem*{df*}{Definition}

\theoremstyle{remark}

\newtheorem*{rem*}{Remark}

\numberwithin{equation}{section}

\newcommand{\ci}[1]{_{ {}_{\scriptstyle #1}}}
\newcommand{\ti}[1]{_{\scriptstyle \text{\rm #1}}}

\newcommand{\ut}[1]{^{\scriptstyle \text{\rm #1}}}

\newcommand{\cA}{\mathcal{A}}
\newcommand{\cB}{\mathcal{B}}
\newcommand{\cD}{\mathscr{D}}

\newcommand{\cX}{\mathcal{X}}

\newcommand{\cM}{\mathcal{M}}

\newcommand{\cG}{\mathcal{G}}

\newcommand{\cJ}{\mathcal{J}}
\newcommand{\cK}{\mathcal{K}}
\newcommand{\cH}{\mathcal{H}}

\newcommand{\fS}{\mathfrak{S}}
\newcommand{\f}{\varphi}

\newcommand{\e}{\varepsilon}

\newcommand{\A}{\mathbb{A}}

\newcommand{\R}{\mathbb{R}}
\newcommand{\Z}{\mathbb{Z}}

\newcommand{\E}{\mathbb{E}}

\newcommand{\ch}{\operatorname{ch}}

\newcommand{\1}{\mathbf{1}}

\newcommand{\wt}{\widetilde}

\newcommand{\clos}{\operatorname{clos}}

\newcommand{\cz}{Calder\'{o}n--Zygmund\ }
\newcommand{\rk}{\operatorname{rk}}

\newcommand{\spn}{\operatorname{span}}

\newcommand{\La}{\langle }
\newcommand{\Ra}{\rangle }

\newcommand{\ssd}{{\scriptscriptstyle\Delta}}
\newcommand{\sd}{{\scriptstyle\Delta}}

\newcommand{\fdot}{\,\cdot\,}

\def\cyr{\fontencoding{OT2}\fontfamily{wncyr}\selectfont}
\DeclareTextFontCommand{\textcyr}{\cyr}


%
{\end{list}}      

\renewcommand{\labelenumi}{(\roman{enumi})}


\newcounter{vremennyj}

\newcommand\cond[1]{\setcounter{vremennyj}{\theenumi}\setcounter{enumi}{#1}\labelenumi\setcounter{enumi}{\thevremennyj}}

\begin{document}
\title[Weighted martingale multipliers]%
{%
Weighted martingale multipliers in non-homogeneous setting and outer measure spaces 
}
\author[C.~Thiele]{Christoph Thiele}
\thanks{CT is supported by the NSF grant DMS-1001535}
\address{Mathematisches Institut,  	Universit\"at Bonn 
 \newline\indent	Endenicher Allee 60,	D - 53115 Bonn, Germany
 \newline  \indent
 {and} \vskip1mm  
 Department of Mathematics, UCLA, Los Angeles, CA 90095-1555
  }
 \email{thiele@math.uni-bonn.de \textrm{(C.\ Thiele)}}
\author[S.~Treil]{Sergei Treil}
\thanks{ST is partially supported by the NSF grant DMS-1301579}
\address{Department of Mathematics, Brown University, 151 Thayer St., Box 1917, \newline\indent Providence, RI 02912, USA}
\email{treil@math.brown.edu \textrm{(S.\ Treil)}}
\author[A.~Volberg]{Alexander Volberg}
\thanks{AV is partially supported by the NSF grant DMS-1265549 and by the Hausdorff Institute for Mathematics, Bonn, Germany}
\address{Department of Mathematics, Michigan Sate University, East Lansing, MI. 48823}
\email{volberg@math.msu.edu \textrm{(A.\ Volberg)}}
\makeatletter
\@namedef{subjclassname@2010}{
  \textup{2010} Mathematics Subject Classification}
\makeatother
\subjclass[2010]{42B20, 42B35, 47A30}
%
%
\keywords{Weighted outer measure spaces, 
Bellman function, 
   martingale multipliers, bilinear embeddings}
\begin{abstract}
We investigate the unconditional  basis property  of martingale differences in weighted $L^2$ spaces in the non-homogeneous situation (i.e.~when the reference measure is not doubling). 
 Specifically, we prove that finiteness of  the quantity $[w]\ci{A_2}=\sup_I \,\La w\Ra_I\La w^{-1}\Ra_I$, defined through 
averages $\La\fdot \Ra_I$ relative to the reference measure $\nu$, implies that each  martingale 
transform relative to $\nu$ is bounded in $L^2(w\, d\nu)$. Moreover, we prove  the linear in $[w]\ci{A_2}$ estimate of the unconditional basis constant of the Haar system. 

Even in the classical case of the standard dyadic lattice in $\R^n$, where the results about unconditional basis and linear in $[w]\ci{A_2}$ estimates are known, our result gives something new, because all the estimates are independent of the dimension $n$.

Our approach combines the technique of outer measure spaces with the Bellman function argument.

\end{abstract}
\maketitle

\section{Introduction}
The classical Haar system (which is an orthonormal basis in $L^2(\R)$) is an unconditional basis in the weighted space $L^2(w)= L^2(\R, w)$ if and only if the weight $w$ satisfies the so-called dyadic Muckenhoupt $A_2$ condition. 

This result can be easily generalized to the bases of Haar subspaces (a.k.a~martingale difference spaces) in $L^2(\R^d)$, where for a cube $Q$ the corresponding Haar subspace $H\ci Q$ is the subspace of dimension $2^d-1$, consisting of functions supported on $Q$, constant on children of $Q$ and orthogonal (in the unweighted $L^2$) to constants.  

These results we proved in \cite{GundyWheeden1973}, where among other problems the weighted Littlewood--Paley estimates were considered: the equivalence if these estimates to the unconditional basis property is just the standard fact of the theory of bases. The fact that the Muckenhoupt condition is equivalent to the Haar system being a basis (which from the modern standpoint is almost trivial) was established earlier in \cite{Krantzberg-1971}. 

In this paper we investigate what happens in the non-homogeneous situation. A typical example is the standard dyadic lattice in $\R^d$, with the underlying measure being not the Lebesgue measure, but an arbitrary Radon measure $\nu$ in $\R^n$. Then our Haar subspaces are orthogonal to constants in $L^2(\nu)$ and we want to describe weighs $w$ for which the Haar subspaces form an unconditional basis in $L^2(wd\nu)$. We, in fact, consider a more general martingale situation, when we do not have any bound on the dimension of the Haar subspaces, and moreover some (or all) Haar subspaces can be infinite-dimensional.  

We prove that in this general case the corresponding martingale $A_2$ condition is also necessary and sufficient for the system of Haar subspaces to be an unconditional basis in the weighted space $L^2(wd\nu)$.  We also prove that, as in the homogeneous case, the \emph{unconditional basis constant} of this system  admits the estimate which is linear in the $A_2$ characteristic $[w]\ci{A_2}$ of the weight $w$. 

The problem, as we explain below in Section \ref{s:Setup} is equivalent to the weighted estimates of the so-called \emph{martingale multipliers}. Thus, the martingale multipliers are the most natural ``singular'' martingale transforms.

When we started the project, we were not sure that the $A_2$ condition is sufficient for the Haar system being an unconditional basis (necessity is a simple fact), and the linear in the $A_2$ characteristic estimate seemed like a long shot. 

The reason for such pessimism was that the the non-homogeneous situation is very different from the classical dyadic case (or from a homogeneous situation). For example in the classical dyadic situation the dyadic  $A_2$ condition is sufficient for the boundedness of general Haar (martingale) transforms. Here a Haar (martingale) transform $T$ is a bounded (in the unweighted $L^2$) operator which is diagonal in the orthogonal basis of the Haar subspaces. And it is known and is not hard to show that in the classical dyadic situation  if the weight satisfy the dyadic $A_2$ condition, then  all Haar transforms $T$ are bounded in $L^2(w)$ and 
\[
\|T\|\ci{L^2(w)} \le C([w]\ci{A_2}) \|T\|\ci{L^2}. 
\]
Moreover, it was shown that in the classical situation the estimate is linear in $[w]\ci{A_2}$. 

The non-homogeneous situation is quite different. An example (which we present in Section \ref{counterE} below) shows that in the non-homogeneous situation  Haar (martingale) transforms (not martingale multiplier!) are not necessarily bounded in $L^2(w)$ with $w$ satisfying the $A_2$ condition, even in the simple model case of the standard dyadic lattice in $\R^2$ with a general underlying measure $\nu$.%
\footnote{The case of dyadic lattice in $\R$, with a general underlying measure $\nu$ is an exception: all Haar subspaces are one-dimensional, so any Haar transform is a \emph{Haar multiplier}  (all blocks are multiples of identity), and it is the main result of this paper that the bounded in the unweighted $L^2$ Haar multipliers act in $L^2(w)$ if $w$ satisfies the $A_2$ condition.  } 

Here by \emph{martingale transform} we mean an operator which is diagonal in the basis of the martingale difference spaces; but \emph{martingale multiplier} is a martingale transform with all blocks being just multiples of identity.  

Interesting examples illustrating intricacies of the non-homogeneous case can be found in \cite{LMP}, \cite{Tr_Comm-para2010}. For example,  in \cite{Tr_Comm-para2010} an unbounded (in unweighted non-homogeneous $L^p$, $1<p<\infty$, $p\ne 2$) martingale transform   with uniformly bounded diagonal blocks was constructed there. Note, that the above mentioned counterexample  in the weighted $L^2$ is obtained by constructing a weight and blocks that are uniformly bounded in the unweighted case, but fail to be uniformly bounded in the weighted $L^2$. 

Another interesting result from \cite{Tr_Comm-para2010} that the boundedness of a paraproduct in the unweighted $L^2$ is not equivalent to the symbol being in the martingale BMO; symbol in BMO is only a sufficient condition. While, like in the homogeneous case, it is sufficient to test the boundedness of a paraproduct on characteristic functions of ``intervals'', this testing condition in the non-homogeneous case depends on $p$ and is not equivalent to the symbol being in BMO. 

Also, interesting results about non-homogeneous case were obtained in  \cite{LMP}, where a dyadic analogue of the Hilbert Transform was considered.



As for the weighted estimates for martingale transform, it was shown recently (as a byproduct of other results) in \cite{TV} that the $A_2$ condition is sufficient for the boundedness of the so-called $L^1\otimes L^1$ normalized martingale transforms (although we  did not get the linear in $A_2$ characteristic estimate in  \cite{TV} ). The $L^1\otimes L^1$normalization condition means that each diagonal block $T\ci I$ is represented an integral operator with kernel $K\ci I$, $\| K\ci I\|_\infty \le \nu(I)^{-1}$. Note that the martingale multipliers considered in this paper are generally not $L^1\otimes L^1$ normalized; they are  $L^1\otimes L^1$ normalized only in the homogeneous case when the ratio of the measures of  a parent and a child is uniformly bounded.   
 
In \cite{TV} the authors asked specifically about the conditions on the boundedness of the martingale multipliers in the non-homogeneous case. We answer this question here.

Finally, we suspect that $L^1\otimes L^1$ normalized martingale transforms are another natural class of martingale  operators. In the classical homogeneous situation  a \cz operator can be represented as a weighted average of $L^1\otimes L^1$ normalized Haar shifts with weights decaying exponentially in complexity, and, in fact, we can prove that a similar decomposition holds in the non-homogeneous case. And a Haar shift of complexity $n$ is essentially the sum of $n$ martingale transforms, each in its own filtration obtained from the original one by skipping generations. This will be presented in a subsequent paper resulting in the proof of $A_2$ conjecture for arbitrary reference measure.


The authors are grateful to Carlos P\'erez for organizing an inspiring Summer School
in Santander, Spain, where the research on this paper was initiated.

\section{Setup}
\label{s:Setup}
Consider a $\sigma$-finite measure space $(\cX, \fS, \nu)$ with the filtration (i.e.~with the sequence of increasing $\sigma$-algebras) $\fS_n$, $n\in\Z$, $\fS_n\subset\fS_{n+1}$. 

We assume that each $\sigma$-algebra $\fS_n$ is \emph{atomic}, meaning that 
there exists a countable \emph{disjoint} collection $\cD_n$ of the sets of positive measure (atoms), such that every $A\in\fS_n$ is a union of sets $I\in\cD_n$. 

The fact that $ \fS_n\subset \fS_{n+1}$ means that every $I\in \cD_n$ is at most countable union of $I'\in\cD_{n+1}$. 

We denote by $\cD=\bigcup_{n\in\Z}\cD_n$ the collection of all atoms  (in all generations). 

The typical example will be the filtration given by a dyadic lattice in $\R^d$, so the notation $\cD$. Note, that we do not assume any homogeneity  in our setup, so the more interesting example will be the same dyadic lattice in $\R^d$, but the underlying measure is an arbitrary Radon measure $\nu$.  

We will allow a situation  when an atom $I$ belongs to several (even infinitely many) generations $\cD_n$. The leading particular case is an arbitrary measure $\nu$ on $\R^d$, whose support is called $\cX$, and the usual standard dyadic cube $I$ then will be included in the filtration only if $\nu(I)>0$ (in particular, cubes $I$ such that $I\cap\cX=\emptyset$ are not in $\cD$). In this example a cube $I$ of the filtration can have only one child  in the next level of the filtration (and not $2^d$ children), and this can go on for some time.
However, we will not allow $I$ to be in all generations, because in this case nothing interesting happens on the interval $I$. 

We  usually will not assign a special symbol for the underlying measure of the set, meaning that we use $|A|$ instead of $\nu(A)$. But the reader is reminded that this is just a notational convention, our  ``$|I|$" can be very far from being Lebesgue measures of $I$. The notation is chosen to emphasize that ``everything is as if we would have Lebesgue measure". However, the reader should remember that we are in the ``mine field": if we would change our operator slightly we would be in trouble. So one should be quite careful, and the proofs hide many surprises.

Note that our filtered space $\cX$ can be represented as a countable (finite or infinite) direct sum of the filtered spaces treated in \cite{Tr_Comm-para2010}, so all the results from \cite{Tr_Comm-para2010} hold in our case.  

For an interval $I\in\cD$ define it lower and upper ranks $\rk_-(I)$ and $\rk_+(I)$ as 
\[
\rk_-(I) = \inf\{n: I\in\cD_n\}, \qquad \rk_+(I) :=\sup\{ n: I\in \cD_n\}. 
\] 

\begin{df}
\label{df:child} Let $I\in\cD$, and let $n=\rk_+(I)$. Then the intervals $I'\in\cD_{n+1}$ such that $I'\subset I$ are called the \emph{children} of $I$. The collection of all children of $I$ will be denoted by $\ch(I)$. 

If $\rk_+(I)=+\infty$  we set $\ch(I)=\{I\}$.

We will also need the notion of \emph{martingale} or \emph{time} children of an interval (atom) $I\in\cD_n$, 
\[
\ch\ut{t} (I,n) =\{I'\in \cD_{n+1} : I'\subset I\}. 
\]
Note that if $n<\rk_+(I)$ then $\ch\ut{t} (I,n) =\{I\}$. 

Note also that in the last definition we require that $I\in\cD_n$. Since $I$ can be in several  $\cD_k$s, we define the martingale children for the pair $(I,n)$. 
%
\end{df}

\subsubsection{Martingale differences, martingale difference spaces}
For a measurable $I$ we define the average (recall that $|I|$ is a short hand for $\nu(I)$)
\[
\La f\Ra\ci I :=|I|^{-1} \int_I f d\nu, 
\]
and the averaging operator $\E\ci I$ by
\[
\E\ci I f = \La f\Ra\ci I \1\ci I. 
\]
For $I\in\cD$ the \emph{martingale difference operator} $\Delta\ci I$ is given by
\begin{align}
\label{Delta_I}
\Delta\ci I f := \sum_{I'\in\ch(I) } \E\ci{I'} f \  - \ \E\ci I f ; 
\end{align}
note that formally 
if $\ch(I)=\{I\}$ then 
 $\Delta\ci I =0$. 
 
 For $I\in\cD_n$ the \emph{time} martingale difference $\Delta\ci{(I,n)}\ut{t}$ is given by 
\[
\Delta\ci{(I,n)}\ut{t} := \sum_{I'\in\ch\ut{t}(I,n) } \E\ci{I'} f \  - \ \E\ci I f
\]

Let $\E_n$ be the conditional expectation with respect to the $\sigma$-algebra  $\fS_n$
\[
\E_n:= \sum_{I\in\cD_n} \E\ci I, 
\]
and let $\Delta_n$ be the corresponding martingale difference
\[
\Delta_n := \E_{n} -\E_{n-1} = \sum_{I\in\cD_{n-1}} \Delta\ci{(I, n-1)}\ut{t} = \sum_{I\in\cD: \,\rk_+(I)=n-1} \Delta\ci{I}
\]

Let $\fS_\infty$ be the $\sigma$-algebra generated by $\fS_n$, $n\in\Z$, and let $\fS_{-\infty}:= \bigcap_{n\in\Z} \fS_n$. To simplify the notation we assume that $\fS_\infty=\fS$, so we deal only with $\fS_\infty$ measurable functions. 

The sigma algebra $\fS_{-\infty}$ is generated by the collection $\cD_{-\infty}$ of atoms $I$ of form $I=\bigcup_{k} I_k$ where $I_k\in\cD_k$, $I_k\subset I_{k-1}$. Note that the atoms $I\in\cD_{-\infty}$ do not have to cover $\cX$: in fact it is possible that $\cD_{-\infty} = \varnothing$, and sigma algebra $\fS_{-\infty}$ is trivial. 

We denote the collection of all atoms $I\in \cD_{-\infty}$ of finite measure ($|I|<\infty$) by $\cD_{-\infty}\ut{fin}$. 

For $I\in\cD$ denote by $D\ci I$ the martingale difference space $D\ci I :=\Delta\ci I L^2$, and similarly, let $D_k := \Delta_k L^2$. For $I\in \cD_{-\infty}\ut{fin}$ we define 
\[
D\ci I = E\ci I := \E\ci I L^2 =\spn\{\1\ci I\}. 
\]
Define also 
\[
\E_{-\infty} := \sum_{I\in\cD_{-\infty}\ut{fin}} \E\ci I. 
\]

As one can easily see the collection of subspaces $E\ci I$, $I\in\overline\cD$, where 
$\overline\cD:=\cD\cup\cD_{-\infty}\ut{fin}$ is an orthogonal basis in $L^2$. 

We are interesting on the case, when this system is an unconditional basis in the weighted space $L^2(w)$, where $w$ is a weight, i.e.~$w\ge 0$, $w\in L^1(I)$ for all $I\in\cD$. 

Note, that in the case when $\dim D\ci I =\infty$ (i.e. when $I$ has infinitely many children) we have to be a bit more careful, since generally in this case $\Delta\ci I L^2\not \subset L^2(w)$, and they do not need to be closed. We need to introduce the subspaces $\mathring D\ci I:= \Delta\ci I L^\infty$, and then ask when the  the subspaces $D\ci I^w:=\clos\ci{L^2(w)}\mathring D\ci I$ form an unconditional basis in $L^2(w)$. If $\dim D\ci I <\infty$ then $\mathring D\ci I =D\ci I$, so introducing $\mathring D\ci I$ is a moot point in this case. 

Similarly, let us introduce subspaces $\mathring D_k :=\Delta_k (L^1\cap L^\infty)$ and $D_k^w:= \clos\ci{L^2(w)}\mathring D_k$.  . 

The following elementary theorem is an immediate consequence of the general theory of bases. 

\begin{thm}
\label{t:bases-01}
The following statements are equivalent
\begin{enumerate}
\item The system of subspaces $\{D\ci I^w:=\clos\ci{L^2(w)} \mathring D\ci I:\,I\in\overline\cD, \, D\ci I^w\ne\{0\} \}$ is an unconditional basis in $L^2(w)$. 
\item The system of subspaces $\{D_k^w:=\clos\ci{L^2(w)} \mathring D_k:\,-\infty \le k <\infty, D_k^w\ne\{0\} \}$ is an unconditional basis in $L^2(w)$. 
\item The martingale multipliers  $T_\sigma$, $\sigma=(\sigma\ci I )\ci{I\in\cD}$, $\sigma\ci I \in \{0,1\}$ 
\[
T_\sigma f = \sum_{I\in\cD} \sigma\ci I \Delta\ci I f
\]
are uniformly bounded in $L^2(w)$. 
\item The martingale multipliers  $T_\sigma$, $\sigma=(\sigma\ci I )\ci{I\in\cD}$, $|\sigma\ci I |\le1$  
are uniformly bounded in $L^2(w)$. 

\item The martingale multipliers  $T_\alpha$, $\alpha=(\alpha_k )\ci{k\in\Z}$, $\alpha_k \in \{0,1\}$ 
\[
T_\alpha f = \sum_{k\in \Z} \alpha_k \Delta_k f
\]
are uniformly bounded in $L^2(w)$. 

\item The martingale multipliers  $T_\alpha$, $\alpha=(\alpha_k )\ci{k\in\Z}$, $|\alpha_k |\le 1$ 
\[
T_\alpha f = \sum_{k\in \Z} \alpha_k \Delta_k f
\]
are uniformly bounded in $L^2(w)$.
\end{enumerate}
Moreover, the supremums $C\ti{\cond3,\cond4}= \sup_\sigma \|T_\sigma\|\ci{L^2(w)}$ 
from \textup{\cond3} and \textup{\cond4} are equivalent,
\[
C\ti{\cond3} \le C\ti{\cond4} \le 2 C\ti{\cond3}. 
\]
\end{thm}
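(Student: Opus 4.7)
The six conditions are standard repackagings of the unconditional Schauder decomposition property for the fine system $\{D\ci I^w\}$ and the coarse system $\{D_k^w\}$. The plan is to prove, in order, three groups of equivalences: \textup{(iii)}$\Leftrightarrow$\textup{(iv)} and \textup{(v)}$\Leftrightarrow$\textup{(vi)} (passage from $\{0,1\}$-valued to bounded multipliers, which also yields the quantitative estimate $C\ti{(iii)}\le C\ti{(iv)}\le 2C\ti{(iii)}$); then \textup{(i)}$\Leftrightarrow$\textup{(iv)} and \textup{(ii)}$\Leftrightarrow$\textup{(vi)} (the textbook characterization of unconditionality via multipliers); and finally the refinement step \textup{(iv)}$\Leftrightarrow$\textup{(vi)} linking the fine and coarse decompositions.

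The first group is a soft convex-hull argument: $[0,1]^\cD$ is the weak-$\ast$ closed convex hull of $\{0,1\}^\cD$, and the map $\sigma\mapsto\La T_\sigma f,g\Ra\ci{L^2(w)}$ is weak-$\ast$ continuous on the dense class of test functions $f,g$ with finite martingale support. Weak-$\ast$ lower semicontinuity of the operator norm, combined with convexity, gives the bound on $[0,1]^\cD$, and writing $\sigma\in[-1,1]^\cD$ as $\sigma^+-\sigma^-$ doubles the constant. The equivalences \textup{(i)}$\Leftrightarrow$\textup{(iv)} and \textup{(ii)}$\Leftrightarrow$\textup{(vi)} reduce to the standard abstract fact that a Schauder decomposition is unconditional if and only if all bounded-coefficient multipliers are uniformly bounded; the only non-abstract input is $L^2(w)$-convergence of the expansion $f=\sum_I\Delta\ci I f$, obtained from uniform boundedness of partial sums together with convergence on the dense class $L^1\cap L^\infty\cap L^2(w)$.

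The crucial step is the refinement \textup{(iv)}$\Leftrightarrow$\textup{(vi)}. One direction is immediate: $T_\alpha=T_\sigma$ for the constant-on-generations choice $\sigma\ci I:=\alpha\ci{\rk_+(I)+1}$, by the identity $\Delta_k=\sum_{I:\,\rk_+(I)=k-1}\Delta\ci I$. For the converse \textup{(vi)}$\Rightarrow$\textup{(iv)}, the key observation is that for each $k$ the atoms $\{I\in\cD:\rk_+(I)=k-1\}$ are pairwise disjoint, being a subcollection of the partition $\cD_{k-1}$. Hence the bounded function $m_k:=\sum_{\rk_+(I)=k-1}\sigma\ci I\,\1\ci I$ satisfies $|m_k|\le 1$ pointwise, and disjointness of supports gives
\[
T_\sigma f=\sum_k m_k\Delta_k f, \qquad m_k\Delta_k f=\sum_{I:\,\rk_+(I)=k-1}\sigma\ci I\,\Delta\ci I f\in D_k^w.
\]
Since $L^2(w)$ is Hilbert and \textup{(vi)} makes $\{D_k^w\}$ unconditional, averaging over random signs yields the two-sided square-function equivalence $\|\sum_k g_k\|^2\ci{L^2(w)}\sim\sum_k\|g_k\|^2\ci{L^2(w)}$ for $g_k\in D_k^w$. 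Applying this to both $f=\sum_k\Delta_k f$ and $T_\sigma f=\sum_k m_k\Delta_k f$, together with the pointwise bound $|m_k\Delta_k f|\le|\Delta_k f|$, yields $\|T_\sigma f\|\ci{L^2(w)}\lesssim\|f\|\ci{L^2(w)}$.

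The main obstacle I expect is the bookkeeping of convergence and closures when some $D\ci I$ is infinite-dimensional: one must work with the closures $D\ci I^w$ and $D_k^w$ rather than algebraic ranges, verify $m_k\Delta_k f\in D_k^w$ by approximation, and ensure that the series converge in $L^2(w)$ rather than merely in some weaker sense. These issues are handled by reducing first to $f$ in the dense class $L^1\cap L^\infty\cap L^2(w)$ and extending by the uniform operator-norm bounds.
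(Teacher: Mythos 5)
Your proposal is correct and follows essentially the route the paper has in mind: the paper gives no written proof, declaring the theorem an immediate consequence of the general theory of bases (unconditionality of a Schauder decomposition $\Leftrightarrow$ uniform boundedness of bounded-coefficient multipliers, plus the convexity step $\{0,1\}\to[-1,1]$ giving the factor $2$). Your extra step linking the fine and coarse systems — writing $T_\sigma f=\sum_k m_k\Delta_k f$ with $m_k=\sum_{\rk_+(I)=k-1}\sigma\ci I\1\ci I$, using disjointness of same-rank atoms and the identity $\Delta_k=\sum_{\rk_+(I)=k-1}\Delta\ci I$, then random-sign averaging in the Hilbert space $L^2(w)$ — is a correct rendering of exactly this standard machinery, with the convergence/closure caveats you flag being the same ones the paper silently suppresses.
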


\begin{rem*}
The supremum $C\ti{cond3} := \sup_\sigma \|T_\sigma\|\ci{L^2(w)}$, where the supremum is taken over all martingale multipliers $T_\sigma$ from \cond3, is what is usually called the \emph{unconditional basis constant} of the system of the martingale difference spaces $D\ci I^w$, $I\in\cD$. 
\end{rem*}

\subsection{Necessity of the \texorpdfstring{$A_2$}{A2} condition}
In what follows we will always assume that $w\not\equiv 0$ on any $I\in \cD_{-\infty}$; otherwise we can just remove the corresponding intervals. 
\begin{prop}
\label{p:nec-01}
Let $w\not\equiv 0$ on any $I\in \cD_{-\infty}$.
The following statements are equivalent
\begin{enumerate}
\item  The system of subspaces $D_k^w:=\clos\ci{L^2(w)} \mathring D_k$, $-\infty \le k <\infty$ is a basis in $L^2(w)$. 
\item The projections $P_{m,n}:= \sum_{k=m}^n \Delta_k$ are uniformly bounded in $L^2(w)$. 
\item The weight $w$ satisfies the following Muckenhoupt $A_2$ condition
\begin{align*}
[w]\ci{A_2} := \sup_{I\in\overline\cD} \La w\Ra\ci I \La w^{-1}\Ra\ci I <\infty.
\end{align*}
where $\overline\cD= \cD\cup\cD_{-\infty}\ut{fin}$. 
\end{enumerate}
Moreover, if the above conditions hold, then
\[
\frac12 [w]\ci{A_2}^{1/2} \le \sup_{m,n\in\Z} \|P_{m,n}\|\ci{L^2(w)} \le 2 [w]\ci{A_2}^{1/2}
\]
\end{prop}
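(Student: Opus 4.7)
The equivalence (1)$\Leftrightarrow$(2) is a classical fact from the theory of Schauder bases: a system of complementary closed subspaces is a basis exactly when its partial-sum projections are uniformly bounded. I therefore concentrate on the quantitative equivalence of (2) and (3). For the upper bound, I decompose $P_{m,n}=\E_n-\E_{m-1}$ and estimate each conditional expectation. For $f\in L^2(w)$ and $I\in\cD_k$, the Cauchy--Schwarz inequality
\[
\La f\Ra\ci I^{\,2}=\Big(|I|^{-1}\int_I (fw^{1/2})\,w^{-1/2}\,d\nu\Big)^{2}\le\La f^{2}w\Ra\ci I\,\La w^{-1}\Ra\ci I,
\]
multiplied by $|I|\La w\Ra\ci I$ and summed over $I\in\cD_k$, yields $\|\E_k f\|\ci{L^2(w)}^{2}\le[w]\ci{A_2}\|f\|\ci{L^2(w)}^{2}$; hence $\|\E_k\|\ci{L^2(w)}\le[w]\ci{A_2}^{1/2}$, and the triangle inequality closes the bound $\|P_{m,n}\|\ci{L^2(w)}\le 2[w]\ci{A_2}^{1/2}$.

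For the lower bound, set $C:=\sup_{m,n\in\Z}\|P_{m,n}\|\ci{L^2(w)}$ and fix $I\in\overline\cD$. If $\rk_+(I)=\infty$, then $I$ is an atom of $\fS_n$ for all sufficiently large $n$, which forces $\fS_\infty|_I$ to be trivial; since $\fS_\infty=\fS$, the weight $w$ must be constant on $I$, giving $\La w\Ra\ci I\La w^{-1}\Ra\ci I=1$ and nothing to prove. Otherwise $m_I:=\rk_+(I)+1\in\Z$ is well defined, and I propose the test function
\[
f=\Big(w^{-1}-\frac{1}{\La w\Ra\ci I}\Big)\1\ci I.
\]
Assume first $\La w^{-1}\Ra\ci I<\infty$, so that $f\in L^2(w)$; a direct expansion gives $\|f\|\ci{L^2(w)}^{2}=(|I|/\La w\Ra\ci I)(\La w\Ra\ci I\La w^{-1}\Ra\ci I-1)$. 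Because $I$ is an atom of $\cD_{m_I-1}$, $\E_{m_I-1}f=\La f\Ra\ci I\1\ci I$, so
\[
f-\E_{m_I-1}f=\big(w^{-1}-\La w^{-1}\Ra\ci I\big)\1\ci I.
\]
Levy's martingale convergence theorem gives $\E_n f\to f$ $\nu$-a.e.\ as $n\to\infty$, hence $|\E_n f-\E_{m_I-1}f|^{2}w\to|f-\E_{m_I-1}f|^{2}w$ pointwise a.e.; Fatou's lemma combined with $\|P_{m_I,n}f\|\ci{L^2(w)}\le C\|f\|\ci{L^2(w)}$ yields $\|f-\E_{m_I-1}f\|\ci{L^2(w)}^{2}\le C^{2}\|f\|\ci{L^2(w)}^{2}$. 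Since direct computation gives $\|f-\E_{m_I-1}f\|\ci{L^2(w)}^{2}=|I|\La w^{-1}\Ra\ci I(\La w\Ra\ci I\La w^{-1}\Ra\ci I-1)$, the ratio of this to $\|f\|\ci{L^2(w)}^{2}$ equals $\La w\Ra\ci I\La w^{-1}\Ra\ci I$, so $\La w\Ra\ci I\La w^{-1}\Ra\ci I\le C^{2}$. Taking the supremum over $I\in\overline\cD$ yields $[w]\ci{A_2}^{1/2}\le C$, which is in fact stronger than the stated $\tfrac12[w]\ci{A_2}^{1/2}\le C$.

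The main technical obstacle is the case $\La w^{-1}\Ra\ci I=+\infty$, where the test function is not in $L^2(w)$. I resolve this by truncation: apply the preceding argument with $w^{-1}$ replaced by $\min(w^{-1},R)$, obtain a lower estimate that grows with $R$, and let $R\to\infty$ to force the contradiction $C=+\infty$. Verifying martingale convergence only $\nu$-a.e.\ (rather than in $L^2(w)$-norm) is what allows the clean use of Fatou and sidesteps any circular dependence on an a priori $L^2(w)$-bound for $\E_n$.
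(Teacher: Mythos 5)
Your upper bound and the step (1)$\Leftrightarrow$(2) follow the paper: the paper quotes the identity $\|\E\ci I\|\ci{L^2(w)}^2=\La w\Ra\ci I\La w^{-1}\Ra\ci I$ from \cite{GR} and writes $P_{m,n}=\E_n-\E_{m-1}$, and your Cauchy--Schwarz computation is just the ``$\le$'' half of that identity, so this part is fine. Your lower bound, however, is a genuinely different route. The paper recovers $\E_{-\infty}$ and then $\E_n$ as pointwise limits of the $P_{m,n}$ (which is where the factor $\frac12$ is lost) and then applies the norm identity for $\E\ci I$; you instead test $P_{m_I,n}$ directly on $f=(w^{-1}-\La w\Ra\ci I^{-1})\1\ci I$ and pass to the limit in $n$ by a.e.\ martingale convergence plus Fatou. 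Your computation is correct (the ratio $\|f-\E_{m_I-1}f\|^2\ci{L^2(w)}/\|f\|^2\ci{L^2(w)}$ is exactly $\La w\Ra\ci I\La w^{-1}\Ra\ci I$), and it buys the cleaner constant $[w]\ci{A_2}^{1/2}\le\sup_{m,n}\|P_{m,n}\|\ci{L^2(w)}$, which implies the stated estimate. The truncation step also works, though not verbatim: with $v\ci R=\min(w^{-1},R)$ the algebra no longer collapses to a single ratio, but using $v\ci R w\le 1$ and $v\ci R^2w\le v\ci R$ one gets $\La v\ci R\Ra\ci I\La w\Ra\ci I\le C^2+2$, hence $\La w^{-1}\Ra\ci I<\infty$, after which your exact computation applies; spell this out, since as sketched it is only an announcement. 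The case $\rk_+(I)=+\infty$ is handled correctly (triviality of $\fS_\infty$ on such an atom forces $w$ constant there).

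There is one genuine omission: the supremum in (3) is over $\overline\cD=\cD\cup\cD_{-\infty}\ut{fin}$, and your case analysis via $\rk_+(I)$ only covers $I\in\cD$. An atom $I\in\cD_{-\infty}\ut{fin}$ need not belong to any $\cD_n$, so there is no level $m_I-1$ at which $I$ is an atom and your test-function argument does not apply as written; this is precisely why the paper bounds $\|\E_{-\infty}\|\ci{L^2(w)}\le A$ separately. The fix inside your scheme is short: write $I=\bigcup_k I_k$ with $I_k\in\cD_k$ increasing as $k\to-\infty$, apply your bound $\La w\Ra\ci{I_k}\La w^{-1}\Ra\ci{I_k}\le C^2$ on each $I_k$, and let $k\to-\infty$ using monotone convergence and $|I|<\infty$. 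Relatedly, in (1)$\Leftrightarrow$(2) note that the system includes the index $k=-\infty$, so the basis criterion also involves the projection $\E_{-\infty}$; as in the paper, its boundedness follows from $\E_{-\infty}f=\lim_{-m,n\to\infty}P_{m,n}f$, so this is only a presentational gap.
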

\begin{proof}
The equivalence of \cond1 and \cond2 is just the Banach Basis Theorem. Formally, the Banach Basis Theorem implies that \cond2 is equivalent to the uniform boundedness of $P_{m,n}$ and $\E_{-\infty}$ in $L^2(w)$, but since
\[
\E_{-\infty} f = \lim_{-m,n\to\infty} P_{m,n} f, 
\]
the uniform boundedness of $P_{m,n}$ in $L^2(w)$ implies the estimates for the $\E_{-\infty}$. 

The rest of the proof is based on the well-known fact that 
\[
\|\E\ci I\|\ci{L^2(w)}^2 = \La w\Ra\ci I\La w^{-1} \Ra\ci I, 
\]
see for example \cite{GR}. Therefore $\|\Delta\ci I\|\ci{L^2(w)}\le 2 [w]\ci{A_2}$ and so $\|\Delta_k\|\ci{L^2(w)}\le 2 [w]\ci{A_2}$.

On the other hand, 
if 
\[
A:=\sup_{m,n\in\Z} \|P_{m,n}\|\ci{L^2(w)}, 
\]
then $\|\E_{-\infty}\|\ci{L^2(w)} \le A$. Since 
\[
\E_n f -\E_{-\infty} f = \lim_{m\to-\infty} P_{m,n} f 
\]
we conclude that $\|\E_n\|\ci{L^2(w)}\le 2A$. 
\end{proof}

\section{Main result}
\begin{thm}
\label{t:main-01}
Let $w$ be a weight such that $w\not\equiv 0$ on any $I\in\cD_{-\infty}$. Let $w$
satisfies the martingale $A_2$ condition
\begin{align}
\label{A2-01}
\sup_{I\in\cD} \La w\Ra\ci I  \La w^{-1}\Ra\ci I =:[w]\ci{A_2}<\infty
\end{align}
Then all martingale multipliers $T_\sigma$, $\sigma=(\sigma\ci I)\ci{I\in\cD}$, 
$|\sigma\ci I|\le1$ are uniformly bounded
\[
\|T_\sigma\|\ci{L^2(w)} \le C [w]\ci{A_2}, 
\]
where $C$ is an absolute constant. 
\end{thm}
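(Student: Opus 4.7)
\medskip

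\noindent\textbf{Proof plan.} By polarization and duality between $L^2(w)$ and $L^2(w^{-1})$ via the unweighted $L^2$ pairing, the target estimate $\|T_\sigma\|_{L^2(w)\to L^2(w)}\le C[w]\ci{A_2}$ is equivalent (up to absolute constants) to the bilinear embedding
\begin{equation*}
\Bigl|\sum_{I\in\cD}\sigma\ci I\,\La \Delta\ci I f, \Delta\ci I g\Ra\ci{L^2(\nu)}\Bigr|
\;\le\; C\,[w]\ci{A_2}\,\|f\|\ci{L^2(w)}\,\|g\|\ci{L^2(w^{-1})}
\end{equation*}
for $|\sigma\ci I|\le 1$ and for $f,g$ in a dense class (say, $L^\infty$ with finite support across finitely many generations, so that all sums over $\cD$ are finite and we may ignore the subspaces $D\ci I^w$ with $\dim D\ci I=\infty$ up to a standard density argument). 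The natural move is then to substitute $f\mapsto fw^{-1}$, $g\mapsto gw$ and to rewrite the form in terms of the dual pair $(f,g)\in L^2(w^{-1})\times L^2(w)$, splitting each $\Delta\ci I(fw^{-1})$, $\Delta\ci I(gw)$ into a ``martingale part'' and a ``weight-oscillation part'' through the $\E\ci{I'}$--$\E\ci I$ identity. Since $|\sigma\ci I|\le 1$, the bound reduces to controlling
\begin{equation*}
\sum_{I\in\cD}\bigl|\La\Delta\ci I(fw^{-1}),\,\Delta\ci I(gw)\Ra\bigr|
\end{equation*}
which is now an unsigned bilinear quantity of the type that Bellman-function methods handle.

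\medskip

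\noindent\textbf{Outer measure step.} I would view the pairs $(I,n)$ indexing time-children as points of a tree-like outer measure space $(\cD^\sharp,\mu)$, with tents $T(J)=\{(I,n):I\subset J\}$ and pre-measure $\mu(T(J))=|J|$ in the sense of the Do--Thiele theory. Lift $f,g$ and the weight data $(w, w^{-1})$ to sizes on $\cD^\sharp$ keeping track of the local quantities $\La w\Ra\ci I$, $\La w^{-1}\Ra\ci I$ together with weighted averages of $f$, $g$ and of their time martingale differences $\Delta\ci{(I,n)}\ut{t}$. An outer H\"older inequality then reduces the bilinear sum to a product of outer $L^2$ norms of these sizes. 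The role of the outer framework is to \emph{absorb the non-homogeneity}: because the pair $(I,n)$ is allowed to record the generation in which $I$ is active, the degenerate situation where an atom has a single martingale child (or $\dim D\ci I=\infty$) poses no structural problem—the size carried by such pairs is simply zero.

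\medskip

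\noindent\textbf{Bellman step.} The outer $L^2$ norms are then estimated by a Bellman function $B=B(F,G,u,v)$ of four variables (representing $\La fw^{-1}\Ra\ci I,\La gw\Ra\ci I,\La w\Ra\ci I,\La w^{-1}\Ra\ci I$) defined on the $A_2$ domain $\{uv\le Q\}$, $Q:=[w]\ci{A_2}$. The function is designed so that it satisfies $0\le B\le C\,Q\,\sqrt{FG}$ and a concavity inequality of the form
\begin{equation*}
B(x)-\tfrac{1}{|I|}\!\!\sum_{I'\in\ch\ut{t}(I,n)}\!\!|I'|\,B(x\ci{I'})
\;\ge\; c\,\bigl|\Delta\ci{(I,n)}\ut{t}(fw^{-1})\bigr|\,\bigl|\Delta\ci{(I,n)}\ut{t}(gw)\bigr|
\end{equation*}
(averaged against $\nu$ on $I$), which, after telescoping along the filtration and using $|I|\,\La w\Ra\ci I \La w^{-1}\Ra\ci I\le Q\cdot|I|$ at the top, yields the desired linear-in-$Q$ bound. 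A candidate is a variant of the Nazarov--Treil--Volberg $A_2$ Bellman function adapted to carry the \emph{bilinear} pairing $\sqrt{FG}$ rather than the usual weighted quadratic expression; the key concavity is obtained from a second-derivative computation on the domain $\{uv\le Q\}$.

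\medskip

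\noindent\textbf{Main obstacle.} The delicate point is the Bellman/size estimate in the regime where a parent and its single (or nearly single) child have almost identical averages in $\nu$ but very different averages in $w$ or $w^{-1}$: classical dyadic Bellman arguments implicitly use that $|I'|/|I|$ is bounded away from $0$ and $1$, and this fails here. The outer-measure bookkeeping via $(I,n)$ is what lets us treat these degenerate transitions as ``no-ops'' (the size vanishes on them), so the Bellman inequality only has to be verified on genuine branching steps, where the standard $A_2$ concavity on $\{uv\le Q\}$ remains valid and produces the linear dependence on $[w]\ci{A_2}$.
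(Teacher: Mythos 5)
The central gap is your Bellman step. A function $B(F,G,u,v)$ on the domain $\{uv\le Q\}$ with $0\le B\le CQ\sqrt{FG}$ and concavity gain at least $c\,|\Delta F|\cdot|\Delta G|$ cannot exist: your domain imposes no constraint whatsoever on the variables $F=\La fw^{-1}\Ra\ci I$, $G=\La gw\Ra\ci I$ (you carry no variables $\La f^2w^{-1}\Ra\ci I$, $\La g^2w\Ra\ci I$ tying them to $u,v$), so telescoping your inequality over $\cD(I_0)$ would yield
$\sum_{I\subset I_0}\sum_{I'\in\ch(I)}|I'|\,|F\ci{I'}-F\ci I|\,|G\ci{I'}-G\ci I|\le CQ\,\sqrt{F\ci{I_0}G\ci{I_0}}\,|I_0|$
for \emph{arbitrary} nonnegative $fw^{-1}$ and $gw$, which is false (take both equal to a normalized spike on a deep descendant of $I_0$: the left side is unbounded while the right side stays fixed). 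The pointwise constraint $xy\le Q$ is precisely what makes such a bilinear square-function bound true for the pair $(\La u\Ra\ci I,\La w\Ra\ci I)$, and this is why the paper applies Bellman functions \emph{only} to the two weight variables (Lemmas \ref{l:bell-01} and \ref{l:bell-02}, via Lemma \ref{l:conc-triv}), producing the Carleson properties of Lemmas \ref{l:CMP-01} and \ref{l:CMP-02} --- note that two different functions are needed, one giving a linear and one a quadratic bound in $Q$, used in different places. The data $f,g$ never enter a Bellman function: they are controlled by the $L^2(\mu)$ boundedness of the martingale maximal operator, encoded as outer $L^1(\cD,S^\infty)$ estimates (Lemmas \ref{l:bilin} and \ref{l2}), and the two halves are glued by the $L^1$--$L^\infty$ outer duality of Lemma \ref{l:L1-L^infty}. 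Your ``outer H\"older inequality between outer $L^2$ norms'' is unspecified, and even granting your $B$, the telescoped bound $CQ\sqrt{F\ci{I_0}G\ci{I_0}}|I_0|$ is a purely local quantity; passing from it to $\|f\|\ci{L^2(u)}\|g\|\ci{L^2(w)}$ is a Carleson-embedding/maximal-function step that is entirely missing from your sketch (the maximal operator never appears in it).

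Two further points. First, the reduction must actually be carried out: either the testing/$T(1)$ reduction of Theorem \ref{t:T1} followed by the split of $\Delta\ci I u$ into $h\ci I^w+\gamma\ci I^w\1\ci I$, or the direct decomposition of $\Delta\ci I(fu)$, $\Delta\ci I(gw)$ with the weighted differences $\Delta\ci I^u,\Delta\ci I^w$ into the four sums $\Sigma_1,\dots,\Sigma_4$ of Section \ref{withoutT1}. Merely inserting absolute values in $\sum_I|\La\Delta\ci I(fu),\Delta\ci I(gw)\Ra|$ does not produce quantities expressible through the averages your Bellman function is supposed to see; it is only after this splitting that the weight-oscillation pieces ($\rho\ci I$, $\tau\ci I$) and the $f,g$ pieces separate. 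Second, your ``main obstacle'' is misidentified: the Bellman inequalities in the paper are proved for arbitrary convex combinations $X_0=\sum_k\theta_kX_k$ with any number of children and arbitrary ratios $|I'|/|I|$, so no bookkeeping of pairs $(I,n)$ is needed to excuse degenerate splittings, and the outer-measure structure plays no role in the Bellman step; the genuine non-homogeneous difficulty is rather that martingale differences are not $L^1\otimes L^1$ normalized (cf.\ the counterexample in Section \ref{counterE}), which is exactly what the combination ``two-variable Bellman for the weights $+$ maximal function via outer duality for $f,g$'' is designed to circumvent.
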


It is well known that even in the case of Lebesgue reference measure $\nu$ the first power in the $A_2$ constant is sharp, see e. g. \cite{Wi}.

\begin{rem*}
Repeating almost verbatim the extrapolation arguments of \cite{DGPP}, \cite{GR} we can prove
\[
\|T_\sigma\|\ci{L^p(w)} \le C_p [w]\ci{A_p}^{\max (1\frac1{p-1})}\ ,
\]
which is sharp even if the reference measure $\nu$ is  Lebesgue measure. But here we have {\it arbitrary} reference measure $\nu$. It goes without saying that the averages involved in the definition of $[w]\ci{A_p}$ are all taken with respect to $\nu$.
\end{rem*}

\section{Proof of the main result}
\label{s:ProofMainRes}
\subsection{First reductions}
\label{s:FirstReduct}
Fix $\sigma=(\sigma\ci I)\ci{I\in\cD}$. To simplify the notation we skip index sigma and 
use $T$ for $T_\sigma$. 

For $f\in L^2(w)$ we want to estimate $\|Tf\|\ci{L^2(w)}$. If we define $\wt f=:wf$, 
then $f\in L^2(w^{-1})$ and 
\[
\|\wt f\|\ci{L^2(w^{-1})} = \|f\|\ci{L^2(w)}. 
\]
So, if we denote $u=w^{-1}$ and skip\ $\wt{\phantom{a}}$\ over $f$,  the conclusion of  the theorem can be rewritten in the following symmetric form 
\begin{align}
\label{main2w-01}
\| T (fu)\|\ci{L^2(w)} \le C [w]\ci{A_2} \|f\|\ci{L^2(u)} \qquad \forall f\in L^2(u)
\end{align}

The operator $T=T_\sigma$ is well localized, so to prove \eqref{main2w-01} it is sufficient to test $T_\sigma$ on functions $\1\ci I u$ and its adjoint on the functions $\1\ci{I} w$. More precisely, see Theorem \ref{t:T1} below (for homogeneous situation such result for well localized operators was proved in \cite{NTVmrl}), the inequality \eqref{main2w-01} follows from the estimates
\begin{align}
\label{test-01}
\| T\ci I (\1\ci I u)\|\ci{L^2(w)} &\le C [w]\ci{A_2} \|\1\ci I\|\ci{L^2(u)} = C [w]\ci{A_2} 
\left(\La u\Ra\ci I|I| \right)^{1/2}\\
\notag
\|  T\ci I (\1\ci I w)\|\ci{L^2(u)} &\le C [w]\ci{A_2} \|\1\ci I\|\ci{L^2(w)} = C [w]\ci{A_2} 
\left(\La w\Ra\ci I|I| \right)^{1/2}, 
\end{align}
where 
\[
T\ci I =\sum_{I'\in\cD(I)}\sigma\ci{I'} \Delta\ci{ I'}. 
\]


Because of the symmetry, we only need to check the first inequality. 

Define the (very specific) Haar function $h\ci I$ by
\begin{align}
\label{h_I}
h\ci I := \Delta\ci I \1\ci I u = \Delta\ci I u = \sum_{I'\in\ch(I)} (\La u\Ra\ci{I'} - \La u\Ra\ci{I}) \1\ci{I'}\,.
\end{align}

Using $h\ci I$s we can write 
\[
T\ci{I_0}(\1\ci{I_0}u) = \sum_{I\in\cD:\,I\subset I_0} \sigma\ci I  h\ci I
\]

Define the corresponding weighted Haar functions $h\ci I^w$
\[
h\ci I^w = h\ci I - \gamma\ci I^w \1\ci I, 
\]
where $\gamma\ci I^w$ is chosen to make $h\ci I^w$ orthogonal to constants in $L^2(w)$ (equivalently, to satisfy  $\La h\ci I^ww\Ra\ci I=0$). Direct calculations give us 
\begin{align}
\notag
\gamma\ci I^w & = \La w\Ra\ci I^{-1} |I|^{-1}\sum_{I'\in\ch(I)} \left(\La u\Ra\ci{I'} - \La u\Ra\ci{I}\right) \La w\Ra\ci{I'} |I'| 
\\
\label{gamma_I^w}
& =\La w\Ra\ci I^{-1} \sum_{I'\in\ch(I)} \left(\La u\Ra\ci{I'} - \La u\Ra\ci{I}\right) \left(\La w\Ra\ci{I'} - \La w\Ra\ci{I}\right) |I'|/|I| 
\end{align}
Note that functions $h\ci I^w$ form an orthogonal system in $L^2(w)$, and that 
\[
\|h\ci I\|\ci{L^2(w)}^2 = \|1\ci I\|\ci{L^2(w)}^2 + \|h\ci I^w\|\ci{L^2(w)}^2, 
\] 
so $\|h\ci I^w\|\ci{L^2(w)}^2 \le \|h\ci I\|\ci{L^2(w)}^2$. Therefore 
\begin{align*}
\Biggl\| \sum_{I\in \cD:\, I \subset I_0}  \sigma\ci I  h\ci I^w \Biggr\|^2_{L^2(w)} 
&=  \sum_{I\in \cD:\, I \subset I_0}  
|\sigma\ci I|^2  \|h\ci I^w\|\ci{L^2(w)}^2 \\
&
\le \sum_{I\in \cD:\, I \subset I_0}  
 \|h\ci I\|\ci{L^2(w)}^2 .
\end{align*}
So, if we show that 
\begin{align}
\label{test-03}
\sum_{I\in \cD:\, I \subset I_0} \| h\ci I \|\ci{L^2(w)}^2 \le C [w]\ci{A_2}^2 \|\1\ci{I_0}\|\ci{L^2(u)}^2
\intertext{and that}
\label{test-04}
\Biggl\| \sum_{I\in \cD:\, I \subset I_0}  \sigma\ci I  \gamma\ci I^w \1\ci I \Biggr\|_{L^2(w)} \le C [w]\ci{A_2} \|\1\ci{I_0}\|\ci{L^2(u)}
\end{align}
we get, using triangle inequality that 
\[
\Biggl\| \sum_{I\in \cD:\,I\subset I_0} \sigma\ci I h\ci I \Biggr\|_{L^2(w)} \le C [w]\ci{A_2} \|\1\ci{I_0} \|\ci{L^2(w)}, 
\] 
i.e.~that \eqref{test-01} holds for $I=I_0$. 

Using \eqref{h_I} we can rewrite \eqref{test-03} as 
\begin{align}
\label{test-05}
\sum_{I\in \cD:\, I \subset I_0} \sum_{I'\in\ch(I)} | \La u\Ra\ci{I'} - \La u\Ra\ci{I} |^2 \La w\Ra\ci{I'} |I'|\le C [w]\ci{A_2}^2 \La u\Ra\ci{I_0} |I_0|. 
\end{align}

By duality, \eqref{test-04} follows from the estimate
\[
\sum_{I\in \cD:\, I \subset I_0} |\gamma\ci I^w| \cdot|\La gw\Ra\ci I |\cdot |I| 
\le C [w]\ci{A_2} \|\1\ci{I_0}\|\ci{L^2(u)} \|g\|\ci{L^2(w)} \qquad \forall f \in L^2(w). 
\]
Recalling formula \eqref{gamma_I^w} for $\gamma\ci I^w$ we can rewrite this estimate as 
\begin{align}
\label{test-06}
\sum_{I\in \cD:\, I \subset I_0} \frac{|\La gw\Ra\ci{I}|}{\La w\Ra\ci{I}}\rho\ci I  \cdot |I| \le C [w]\ci{A_2} \La u\Ra\ci{I_0}^{1/2} |\La g^2w\Ra\ci{I_0}|^{1/2}|I_0|. 
\end{align}
where
\[
\rho\ci I:= \sum_{I'\in\ch(I)}  | \La u\Ra\ci{I'} - \La u\Ra\ci{I} |\cdot 
| \La w\Ra\ci{I'} - \La w\Ra\ci{I} |\cdot|I'|/|I|
\]

So, we reduced our main theorem to the estimates \eqref{test-05} and \eqref{test-06}. 

\subsection{Outer measure spaces and reduction to Carleson measure properties}
\label{OMS}
We will reduce the estimates \eqref{test-05}, \eqref{test-06} to checking the Carleson measure properties for some sequences. 
Outer measure spaces give us a very convenient language for doing that. 
We present here some basic facts about such spaces: a reader interested in more details should consult the paper \cite{DT}. 

\subsubsection{Outer measure spaces: basic facts} For $I\in\cD$ denote $\cD(I):=\{I'\in\cD: I'\subset I\}$. 

For a measure $\mu$ on $\cX$ define its \emph{outer lifting} $\mu^*$ to $\cD$ to be the outer measure defined of the sets $\cD(I)$ by 
\[
\mu^*(\cD(I)) =\mu(I). 
\]
The outer measure $\mu^*$ extends to arbitrary $\cA\subset \cD$ by the usual recipe: one considers all collections $\cK\subset \cD$ such that $\cA\subset \bigcup_{I\in\cK} \cD(I)$ and put 
\[
\mu^*(\cA) =\inf_{\cK} \sum_{I\in\cK}\mu^*(\cD(I))
\]
where the infimum is taken over all such collections $\cK$.

To define the outer measure spaces used below we need to introduce the so-called \emph{size} function (a generalization of the local square function). Namely, for a measure $\mu$ on $\cX$, $p\in[1,\infty)$, and a function $F$ on $\cD$  define the size function $S_\mu^p F$ on the collection of sets $\cD(I)$, $I\in\cD$  by
\begin{align*}
S^p_\mu F(\cD(I)) := \left(\mu(I)^{-1} \sum_{I'\in\cD(I)} |F(I')|^p \mu(I')   \right)^{1/p}\,.
\end{align*}
If $\mu$ is the underlying measure $\nu$ we skip the subscript and use $S^p F$; if $d\mu=wd\nu$ we use the notation $S_w^pF$. 

We will also need the size function $S^\infty_\mu$, 
\[
S^\infty_\mu F(\cD(I)) = \sup_{I'\in\cD(I):\, \mu(I')>0} 
|F(I')| . 
\]

For a size function $S$, where $S$ is one of the size functions $S^p_\mu$, $S^\infty$ defined above, 
define the outer $L^\infty(S)$ norm on functions on $\cD$ by 
\[
\|F\|\ci{L^\infty(\cD,S)} := \sup_{I\in\cD} SF(\cD(I));
\]
the space $L^\infty(\cD,S)$ consists of all functions on $\cD$ for which this norm is finite.

To define the $L^p$ space $L^p(\cD, \mu^*, S)$  we need to define what is the outer measure $\mu^*$ of the \emph{superlevel set} $\{SF>\lambda\}=\{\cD(I): SF(\cD(I))>\lambda\}$. To do that we 
consider  all $\cG\subset \cD$ such that $S(F\1\ci{\cD\setminus\cG}) \le \lambda$ (on all $\cD(I)$) and take the infimum of $\mu^*$ of such $\cG$. Formally, 
\[
\mu^*( SF>\lambda ):= \inf\{   \mu^*(\cG): \cG\subset \cD, \, S(F\1\ci{\cD\setminus\cG}) \le \lambda \}.
\]
Then for  a function $F$ on $\cD$ we define
\[
\|F\|\ci{L^p(\cD, \mu^*,\,S)} = \left( p\int_0^\infty \lambda^{p-1} \mu^*(SF>\lambda) d\lambda \right)^{1/p}
\]
If the outer measure $\mu^*$ is the outer lifting of the ambient measure $\nu$ we will skip the measure and use the notation $L^2(\cD, S)$.

We will need the following simple fact. It is a particular instance of the Radon--Nikodym property for outer measure spaces from \cite{DT}.
\begin{lm}[$L^1$--$L^\infty$ duality]
\label{l:L1-L^infty}
Let $F \in L^1(\cD,\mu^*,\, S^\infty_\mu)$ and $G\in L^\infty(\cD,  S^1_\mu)$. Then 
\[
\sum_{I\in\cD} | F(I)G(I)| \mu(I)\le \|F\|\ci{L^1(\cD, \mu^*,\,S^\infty)} \|G\|\ci{L^\infty(\cD,  S_\mu^1)}. 
\]
\end{lm}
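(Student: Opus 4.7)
The plan is to run a layer-cake argument adapted to the outer measure framework. Writing $|F(I)| = \int_0^\infty \1\ci{\{|F(I)|>\lambda\}}\,d\lambda$ and applying Tonelli yields
\[
\sum_{I\in\cD} |F(I)G(I)|\,\mu(I) = \int_0^\infty\Biggl(\,\sum_{I\in\cG_\lambda} |G(I)|\,\mu(I)\Biggr)\,d\lambda,\qquad \cG_\lambda:=\{I\in\cD:\, |F(I)|>\lambda\}.
\]
So the task reduces to two pieces: (a) the pointwise-in-$\lambda$ bound $\sum_{I\in\cG_\lambda}|G(I)|\mu(I) \le \|G\|\ci{L^\infty(\cD,S^1_\mu)}\,\mu^*(\cG_\lambda)$, and (b) the identification of $\mu^*(\cG_\lambda)$ with the outer measure of the superlevel set $\{S^\infty_\mu F>\lambda\}$.

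For (a), I would invoke the covering definition of $\mu^*$: for any collection $\cK\subset\cD$ with $\cG_\lambda\subset \bigcup_{J\in\cK}\cD(J)$, monotone rearrangement gives
\[
\sum_{I\in\cG_\lambda}|G(I)|\,\mu(I) \le \sum_{J\in\cK}\sum_{I\in\cD(J)}|G(I)|\,\mu(I) = \sum_{J\in\cK}\mu(J)\,S^1_\mu G(\cD(J)) \le \|G\|\ci{L^\infty(\cD,S^1_\mu)}\sum_{J\in\cK}\mu(J),
\]
and taking the infimum over $\cK$ produces the desired inequality. This step is the place where the $L^\infty(S^1_\mu)$-norm of $G$ (that is, the Carleson-type condition on $|G|$) is directly converted into an averaged bound on any covering.

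For (b), one direction is free: $\cG_\lambda$ itself is admissible in the infimum defining $\mu^*(S^\infty_\mu F>\lambda)$, since $S^\infty_\mu(F\1\ci{\cD\setminus\cG_\lambda})(\cD(J))\le \lambda$ for every $J\in\cD$. Conversely, any admissible $\cG$ must contain $\cG_\lambda$ entirely: if $I\in\cG_\lambda\setminus\cG$, evaluating the size on $\cD(I)$ gives $S^\infty_\mu(F\1\ci{\cD\setminus\cG})(\cD(I))\ge |F(I)|>\lambda$, a contradiction. Hence $\mu^*(\cG_\lambda)\le \mu^*(\cG)$, and taking the infimum over admissible $\cG$ forces equality. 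Integrating in $\lambda$ now closes the argument:
\[
\sum_{I\in\cD} |F(I)G(I)|\,\mu(I) \le \|G\|\ci{L^\infty(\cD,S^1_\mu)}\int_0^\infty \mu^*(S^\infty_\mu F>\lambda)\,d\lambda = \|G\|\ci{L^\infty(\cD,S^1_\mu)}\,\|F\|\ci{L^1(\cD,\mu^*,S^\infty_\mu)}.
\]
The only place where the outer-measure definition of $L^1$ is genuinely engaged is step (b), and it is clean precisely because $S^\infty_\mu$ is pointwise in nature; for a truly averaged size one would not have such an identity, which is reflected in the asymmetric roles played by $S^\infty_\mu$ and $S^1_\mu$ in the lemma. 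I do not anticipate this to be the main obstacle, since everything reduces to unwinding definitions and a single application of Tonelli.
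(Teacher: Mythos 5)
Your argument is correct and follows essentially the same route as the paper: a layer-cake decomposition in $\lambda$ combined with the fact that the $L^\infty(\cD,S^1_\mu)$ bound on $G$ controls $\sum_I |G(I)|\mu(I)$ over any set covered by collections $\cD(J)$; the paper merely implements this with the specific cover by maximal atoms of $\{|F|>\lambda\}$ (after reducing to finitely supported $F$), while you take the infimum over arbitrary covers and admissible exceptional sets, which is a cosmetic difference. One small point: since the supremum defining $S^\infty_\mu$ runs only over atoms with $\mu(I')>0$, an admissible $\cG$ need not contain those atoms of $\cG_\lambda$ having $\mu(I)=0$, so the inclusion $\cG_\lambda\subset\cG$ in your step (b) can fail on such atoms; they carry zero $\mu^*$-content and contribute nothing to the sum, so the conclusion is unaffected.
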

\begin{proof}
By homogeneity we can assume that $\|G\|\ci{L^\infty(\cD, S_\mu^1)}=1$. Let us treat $|G|$ as a measure on $\cD$. Namely, for $\cK\subset \cD$ define its measure  $\cM(\cK)$  as
\[
\cM(\cK):= \sum_{I\in\cK} |G(I)| \mu(I). 
\]

For $\lambda>0$ let $\cJ_\lambda$ be the collection of maximal intervals $I\in\cD$ for which $|F(I)|>\lambda$ (we can always assume that $F$ has finite support, so  maximal intervals do exist).

Note that 
\begin{align*}
\{|F|>\lambda\} &\subset \bigcup_{I\in\cJ_\lambda} \cD(I), 
\intertext{and that}
\mu^*(S^\infty F>\lambda) & = \sum_{I\in\cJ_\lambda} \mu^*(\cD(I))\ .
\end{align*}
The last equality holds because for any $J\in \cD\setminus \bigcup_{I\in\cJ_\lambda} \cD(I)$ we have $|F(J)|\le \lambda$, and we cannot throw away any $I$ from $\cJ_\lambda$ and still have the same property.

Therefore, 
\begin{align*}
\cM(\{|F|>\lambda\}) & \le \sum_{I\in\cJ_\lambda} \sum_{I'\in\cD(I)}|G(I')|  \mu(I')
\\
&\le \sum_{I\in\cJ_\lambda}  \|G\|\ci{L^\infty(\cD, S_1)} \mu(I) 
\le \sum_{I\in\cJ_\lambda} \mu(I)
\\
& =\sum_{I\in\cJ_\lambda} \mu^*(\cD(I)) =  \mu^*(S^\infty F >\lambda)\ .
\end{align*}

Integrating both side with respect to $d\lambda$ we get the conclusion of the lemma.
\end{proof}

\begin{lm}
\label{l:1over-01}
Let $h$ be a positive function on $I_0\in\cD$, $h, h^{-1}\in L^1(I_0)$. Consider the function $H$ 
on $\cD$ given by the formula 
\[
H(I) =\left\{
\begin{array}{ll} 1/\La h^{-1}\Ra\ci I, \qquad &  I\in \cD(I_0) ;\\
0, & I\notin \cD(I_0) .\end{array}\right. 
\] 
Then $H\in L^1(\cD,  S^\infty)$, and 
\[
\|H\|\ci{L^1(\cD, S^\infty)}\le 2 \|h\|_\nu. 
\]
\end{lm}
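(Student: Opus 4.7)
The plan is to recast the outer $L^1$-norm of $H$ as an ordinary $\nu$-integral of a martingale maximal function, and then appeal to Doob's $L^2$ inequality.

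First I would identify the outer measure of the superlevel sets $\{S^\infty H>\lambda\}$ with ordinary measure. For $\lambda>0$, let $\cJ_\lambda$ be the collection of those $I\in\cD(I_0)$ with $H(I)>\lambda$ which are maximal under set inclusion; distinct elements of $\cJ_\lambda$ are automatically disjoint as subsets of $\cX$. Taking $\cG:=\{I\in\cD(I_0):H(I)>\lambda\}$ and covering $\cG$ by $\cJ_\lambda$ gives the upper bound $\nu^*(\{S^\infty H>\lambda\})\le\sum_{I\in\cJ_\lambda}\nu(I)$; conversely, any admissible choice of $\cG'$ together with a cover $\cK'$ must assign to each $I\in\cJ_\lambda$ some $J\in\cK'$ with $I\subset J$, forcing $\sum_{\cK'}\nu(J)\ge\nu(\bigcup\cJ_\lambda)$, so the matching lower bound also holds. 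Writing $H^*(x):=\sup\{H(I):I\in\cD(I_0),\, x\in I\}$, this identification reads $\nu^*(\{S^\infty H>\lambda\})=\nu(\{H^*>\lambda\})$, and the layer-cake formula yields
\[
\|H\|\ci{L^1(\cD,\,S^\infty)}=\int_{I_0}H^*\,d\nu.
\]

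The crucial reinterpretation is to introduce the finite measure $d\tilde\nu:=h^{-1}d\nu$ on $I_0$ (finite by hypothesis). Since $d\nu=h\,d\tilde\nu$, a direct calculation gives
\[
H(I)=\frac{\nu(I)}{\int_I h^{-1}\,d\nu}=\frac{1}{\tilde\nu(I)}\int_I h\,d\tilde\nu,
\]
so that $H(I)$ is exactly the $\tilde\nu$-average of $h$ over $I$. Consequently $H^*(x)$ is precisely the martingale maximal function $M_{\tilde\nu}h(x)$ of $h$ with respect to the filtration $\{\fS_n\}$ and the measure $\tilde\nu$.

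To finish, I would change the measure from $\nu$ to $\tilde\nu$ using $d\nu=h\,d\tilde\nu$ and apply Cauchy--Schwarz together with Doob's $L^2$ maximal inequality (sharp constant $2$):
\[
\int_{I_0}H^*\,d\nu=\int_{I_0}M_{\tilde\nu}h\cdot h\,d\tilde\nu\le\|M_{\tilde\nu}h\|\ci{L^2(\tilde\nu)}\|h\|\ci{L^2(\tilde\nu)}\le 2\int_{I_0}h^2\,d\tilde\nu=2\int_{I_0}h\,d\nu=2\|h\|_\nu.
\]
The main point requiring care is the very first step --- verifying that the abstract outer measure of $\{S^\infty H>\lambda\}$ coincides with the ordinary $\nu$-measure of $\{H^*>\lambda\}$; this relies on the infimum definition of $\nu^*$ and on the fact that distinct maximal atoms in $\cJ_\lambda$ are genuinely disjoint in $\cX$. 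Once that identification is in place, the remainder of the argument is a clean change of variables that reduces the statement to Doob's classical inequality on $(\cX,\tilde\nu)$.
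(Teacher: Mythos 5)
Your proof is correct, but it runs along a genuinely different route than the paper's. The paper argues purely distributionally: for each maximal $I$ with $H(I)>\lambda$ it uses Chebyshev's inequality to produce a set $E\ci I\subset I$ with $|E\ci I|\ge \frac12|I|$ on which $h^{-1}\le 2\La h^{-1}\Ra\ci I$, hence $h>\lambda/2$, so that $\nu^*(S^\infty H>\lambda)\le 2\,|\{h>\lambda/2\}|$, and then integrates in $\lambda$ — no maximal function theorem is invoked. You instead make the change of measure $d\tilde\nu=h^{-1}d\nu$, observe the identity $1/\La h^{-1}\Ra\ci I=\La h\Ra\ci{I,\tilde\nu}$ so that $H$ becomes the family of $\tilde\nu$-averages of $h$, identify $\|H\|\ci{L^1(\cD,S^\infty)}$ with $\int_{I_0}H^*\,d\nu$ (your careful two-sided verification of $\nu^*(S^\infty H>\lambda)=\nu(\{H^*>\lambda\})$ is sound; only the upper bound, which is what the paper implicitly uses, is actually needed, and the existence of maximal atoms is guaranteed since upward chains inside $\cD(I_0)$ are finite), and then conclude by Cauchy--Schwarz plus Doob's $L^2$ inequality for $M\ut{d}_{\tilde\nu}$, using $\int_{I_0}h^2\,d\tilde\nu=\int_{I_0}h\,d\nu$. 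What each approach buys: the paper's argument is elementary and self-contained (a stopping-time plus Chebyshev estimate), while yours reuses the maximal-operator machinery already present in the paper (the same Doob bound appears in Lemmas \ref{l:bilin} and \ref{l2}) and delivers the stated constant $2$ cleanly — the paper's chain of inequalities, read literally, yields $4\|h\|_\nu$ after the substitution $\lambda\mapsto\lambda/2$, which is of course immaterial for the application.
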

\begin{proof}

Fix $\lambda>0$ and denote by $\cH_\lambda$ the collection of 
maximal  intervals $I\in\cD$ such that $H(I)> \lambda$.
For such maximal $I$ we define
\[
E_I:=\left\{x\in I: h^{-1}(x) \le 2\La h^{-1}\Ra\ci I  \right\}\ .
\]
Then $|E\ci I| \ge \frac12 |I|$. On the other hand, on our maximal $I$, we have
\[
x\in E\ci I \Rightarrow \lambda < \frac1{\La h^{-1}\Ra\ci I} \le 2 h(x)\ .
\]
This inequality implies
\begin{align*}
\nu^*(S^\infty(H)>\lambda) \le \Bigl| \bigcup_{I\in \cH_\lambda} I\Bigr| \le 2\sum_{I\in \cH_\lambda} |E\ci I| =2\Bigl|\bigcup_{I\in \cH_\lambda} E\ci I \Bigr|  \le 2 \left|\{x: h(x)> \lambda/2\}\right|\ .
\end{align*}
Integrating  both sides of the inequality with respect to $d\lambda$ proves the lemma.
\end{proof}

\subsection{Averaging operators and outer measure spaces.}
\label{averagin&OMS}
For a measure $\mu$ on $\cX$ define the averaging operator $\A_\mu$ transforming functions on $\cX$ to functions on $\cD$, 
\[
\A_\mu f(I) := \mu(I)^{-1} \int_I f\,d\mu =: \La f \Ra\ci{I,\mu} , \qquad I\in \cD. 
\]
Define also the maximal operator $M_\mu =M_\mu\ut d$, 
\begin{align}
\label{MaxOp}
M_\mu\ut d f (x) :=\sup_{I\in\cD:\, x\in I} \mu(I)^{-1} \int_I |f|\,d\mu
\end{align}
If $d\mu = wd\nu$, where $\nu$ is the underlying measure, we will use notation $\A_w$, $M_w$. 
\begin{lm}
\label{l:bilin}
Let $w$ be a weight such that $w, w^{-1}\in L^1(I)$ for all $I\in\cD$. 
Then the bilinear operator $f\times g\mapsto \A_{w^{-1}} f \A_{w}g$ is bounded from $L^2(w^{-1})\times L^2(w)$ to $L^1(\cD, S^\infty)$
\[
\| \A_{w^{-1}} f \A_{w} g \|\ci{L^1(\cD, S^\infty)} \le 
4\|f\|\ci{L^2(w^{-1})}\|g\|\ci{L^2(w)}.
\]
\end{lm}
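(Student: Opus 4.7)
The plan is to reduce the outer $L^1(S^\infty)$ bound to an ordinary $L^1(\nu)$ bound on the product of two weighted dyadic maximal functions, and then close with Cauchy--Schwarz and Doob's $L^2$ inequality.

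Set $F(I):=\A_{w^{-1}}f(I)\,\A_w g(I) = \La f\Ra\ci{I,w^{-1}}\La g\Ra\ci{I,w}$. Fix $\lambda>0$, and let $\cJ_\lambda$ be the collection of maximal $I\in\cD$ with $|F(I)|>\lambda$ (after an approximation ensuring such maxima exist, exactly as in Lemma~\ref{l:L1-L^infty}). Taking $\cG:=\bigcup_{J\in\cJ_\lambda}\cD(J)$, every $I'\notin\cG$ satisfies $|F(I')|\le\lambda$ by maximality, so $S^\infty(F\1\ci{\cD\setminus\cG})\le\lambda$ pointwise on $\cD$. Since the $J\in\cJ_\lambda$ are disjoint, this gives
\[
\nu^*(S^\infty F>\lambda) \le \sum_{J\in\cJ_\lambda}\nu(J) = \nu\Bigl(\bigcup_{J\in\cJ_\lambda} J\Bigr).
\]

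Next I would compare this to a pointwise maximal estimate. For $J\in\cJ_\lambda$ and any $x\in J$, the definition of the dyadic maximal operators with respect to the measures $w^{-1}d\nu$ and $w\,d\nu$ yields
\[
|\La f\Ra\ci{J,w^{-1}}|\le M_{w^{-1}}\ut d f(x),\qquad |\La g\Ra\ci{J,w}|\le M_{w}\ut d g(x),
\]
hence $J\subset\{M_{w^{-1}}\ut d f\cdot M_{w}\ut d g>\lambda\}$. Combining with the previous step,
\[
\nu^*(S^\infty F>\lambda)\le\nu\bigl(\{M_{w^{-1}}\ut d f\cdot M_{w}\ut d g>\lambda\}\bigr),
\]
so integrating in $\lambda$ gives
\[
\|\A_{w^{-1}}f\,\A_w g\|\ci{L^1(\cD,S^\infty)} \le \int_\cX M_{w^{-1}}\ut d f\cdot M_{w}\ut d g\,d\nu.
\]

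To finish, I would split $d\nu = w^{-1/2}\cdot w^{1/2}\,d\nu$, apply Cauchy--Schwarz, and invoke Doob's martingale maximal inequality (with constant $2$ on $L^2$) for the dyadic maximal operators relative to $w^{-1}d\nu$ and $w\,d\nu$ respectively:
\[
\int_\cX M_{w^{-1}}\ut d f\cdot M_{w}\ut d g\,d\nu \le \|M_{w^{-1}}\ut d f\|\ci{L^2(w^{-1}d\nu)}\|M_{w}\ut d g\|\ci{L^2(w\,d\nu)}\le 4\|f\|\ci{L^2(w^{-1})}\|g\|\ci{L^2(w)}.
\]
That yields the claimed constant $4$. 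There is no real obstacle here; the only point that needs some care is the translation of the outer-measure superlevel set $\{S^\infty F>\lambda\}$ into an ordinary superlevel set of the product of the two weighted maximal functions, which is exactly the argument modelled on the proof of Lemma~\ref{l:L1-L^infty}.
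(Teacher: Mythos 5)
Your argument is correct and follows essentially the same route as the paper's proof: cover the superlevel set $\{S^\infty F>\lambda\}$ by $\bigcup_{J\in\cJ_\lambda}\cD(J)$ with $\cJ_\lambda$ the maximal intervals, dominate $\nu^*$ of it by $\nu$ of the superlevel set of $M\ut d_{w^{-1}}f\cdot M\ut d_{w}g$, integrate in $\lambda$, and finish with Cauchy--Schwarz plus the $L^2(\mu)$ bound (constant $2$) for the martingale maximal operator. The constant $4$ and all intermediate steps match the paper's proof of Lemma \ref{l:bilin}.
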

\begin{proof}
It suffices to prove the lemma for positive functions $f$ and $g$. We can also assume that $f$ and $g$ are supported on a union of finitely many intervals. 
Let us consider the collection $\cJ_\lambda$ of all maximal  intervals $I\in\cD$ such that
\[
\A_{w^{-1}} f(I) \A_{w}g(I) > \lambda  .
\]
Then for any such maximal interval $I$ and any $x,y\in I$:
\begin{align}
\label{M>FG}
(M\ut d_{w^{-1}} f)(x) (M\ut d_{w}g)(y) \ge \A_{w^{-1}} f(I) A_{w}g(I) >\lambda\ .
\end{align}
Clearly \eqref{M>FG} implies that 
\[
\{ M^d_{w^{-1}} f\cdot M^d_{w} g>\lambda\} \supset \bigcup_{I\in\cJ_\lambda} I\,.
\]

Denote  $\Phi:=  M^d_{w^{-1}} f\cdot M^d_{w} g$.
We use now that the set $\{S^\infty(\A_{w^{-1}} f \, \A_{w} g)>\lambda \}$ is  exactly the union $\bigcup_{I\in\cJ_\lambda} \cD(I)$, and we write
\[
\nu^*(S^\infty(\A_{w^{-1}} f \A_{w} g)>\lambda )\le \sum_{I\in \cJ_\lambda} |I|
\le \left| \{x\in\R: \Phi(x)>\lambda\} \right|  \, .
\]
Integrating with respect to $\lambda$ we then get that
\[
\| \A_{w^{-1}} f \A_{w} g \|\ci{L^1(\cD, S^\infty)} \le \int_\cX M\ut d_{w^{-1}} f\cdot M\ut d_{w} g \,d\nu\ .
\]
But we know that the martingale maximal operator is bounded in $L^2$, namely for any $\mu$
\[
\|M_\mu\ut d f \|\ci{L^2(\mu)} \le 2 \|f\|\ci{L^2(\mu)}. 
\] 
Therefore, 
\begin{align*}
 \int  (M\ut d_{w^{-1}} f\cdot M\ut d_{w} g) d\nu & = \int (M\ut d_{w^{-1}} f) w^{-1/2} (M\ut d_{w} g) w^{1/2} d\nu 
\\
&\le \bigg(\int (M^d_{w^{-1}} f)^2 w^{-1} d\nu \bigg)^{1/2}\bigg(\int (M^d_{w} g)^2 w d\nu\bigg)^{1/2}
\\
& \le 4\|f\|\ci{L^2(w^{-1})}\|g\|\ci{L^2(w)}.
\end{align*} 
This completes the proof of Lemma \ref{l:bilin}.
\end{proof}

\subsubsection{Reduction to the Carleson measure properties}
\label{reduCarl}

We now reduce Theorem \ref{t:main-01} to the following two lemmas, which will be proved in the next section. 

\begin{lm}
\label{l:CMP-01}
The collection $\tau=(\tau\ci I)\ci{I\in\cD}$, 
\[
\tau\ci I = \sum_{I'\in\ch(I)} | \La u\Ra\ci{I'} - \La u\Ra\ci{I} |^2 \La w\Ra\ci{I}\La w\Ra\ci{I'} |I'|/|I|
\] 
satisfies the Carleson measure property
\begin{align}
\label{CMP-01}
\sum_{I\in\cD:\,I\subset I_0} \tau\ci I |I| \le C [w]\ci{A_2}^2 |I_0| \qquad \forall I_0\in\cD
\end{align}
with an absolute constant $C$. 
\end{lm}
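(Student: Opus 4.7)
The plan is to prove this Carleson embedding via a Bellman function argument combined with telescoping on the tree $\cD \cap \cD(I_0)$. Let $Q := [w]\ci{A_2}$ and consider the domain $\Omega_Q := \{(U,V) \in (0,\infty)^2 : UV \le Q\}$. Set $\alpha\ci{I'} := |I'|/|I|$ for $I' \in \ch(I)$, and recall that $\sum_{I' \in \ch(I)} \alpha\ci{I'}(U\ci{I'}, V\ci{I'}) = (U_I, V_I)$, so the pair $(U\ci{I'}, V\ci{I'})$ is a convex combination inside $\Omega_Q$.

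I would seek a function $B : \Omega_Q \to [0, C Q^2]$ satisfying the discrete concavity inequality
\begin{equation*}
B(U_I, V_I) - \sum_{I' \in \ch(I)} \alpha\ci{I'} B(U\ci{I'}, V\ci{I'}) \;\ge\; c\,\tau\ci I
\end{equation*}
for every admissible splitting. Multiplying by $|I|$ and summing over $I\subset I_0$, the left-hand side telescopes: every proper descendant $J\subsetneq I_0$ of the root appears once as a parent (coefficient $+|J|$) and once as a child (coefficient $-|J|$), so the sum collapses to $|I_0| B(U_{I_0}, V_{I_0}) - \sum_{\text{leaves }L} |L| B(U_L, V_L) \le |I_0|\cdot CQ^2$, yielding $\sum_{I\subset I_0}\tau\ci I|I|\le (C/c) Q^2 |I_0|$.

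The main obstacle is constructing $B$. In the symmetric two-child case with children $(U\pm a, V\pm b)$, one computes $\tau\ci I = a^2 V^2$, and the second-order Taylor expansion of the concavity gap is $-\tfrac12(a^2 B_{UU} + 2ab B_{UV} + b^2 B_{VV})$; matching this against $\ge c\,a^2 V^2$ forces $B_{UU} \le -2cV^2$, pointing to a candidate proportional to $-U^2V^2$. However, $B=-U^2V^2$ is not concave on $\Omega_Q$ (its Hessian quadratic form $-2(Va+2Ub)^2 + 6U^2b^2$ changes sign), so an unmodified polynomial ansatz fails. A natural remedy is to take $B(U, V) = Q\cdot F(UV/Q)$ for a suitably concave $F\colon[0,1]\to[0,C]$, combined with a control of the cross-derivative exploiting the a priori constraint $UV\le Q$; the $Q^2$ (rather than linear $Q$) target gives enough flexibility in choosing $B$ with range $\sim Q^2$.

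As a back-up route giving only the slightly worse bound $CQ^2\log Q\cdot|I_0|$, one can proceed in two steps. First, from the $A_2$ condition $V_I V\ci{I'}\le Q^2/(U_IU\ci{I'})$ one obtains the pointwise majorization
\[
\tau\ci I \;\le\; Q^2\sum_{I'\in\ch(I)}\alpha\ci{I'}\frac{(U\ci{I'}-U_I)^2}{U_I U\ci{I'}} \;\le\; 2Q^2\bigl[\log U_I - \textstyle\sum\ci{I'}\alpha\ci{I'}\log U\ci{I'}\bigr],
\]
where the second inequality follows from the elementary estimate $(x-1)^2/x \le 2(x-1-\log x)$ applied with $x = U\ci{I'}/U_I$ and the martingale identity $\sum\alpha\ci{I'}(U\ci{I'}-U_I)=0$. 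Summing and telescoping the right-hand side, then bounding the boundary terms using $\log U + \log V \le \log Q$, gives $\sum\tau\ci I|I|\le CQ^2\log Q\cdot|I_0|$. Removing the logarithm requires the two-variable Bellman function above, which is the genuinely hard step; producing such $B$ explicitly is where I expect the real work to lie.
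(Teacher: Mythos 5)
Your overall scheme---a function $B:\Omega_Q\to[0,CQ^2]$ whose concavity gap at each splitting dominates $c\,\tau\ci I$, followed by multiplying by $|I|$, telescoping over $\cD(I_0)$ and discarding the leaf terms by $B\ge0$---is exactly the paper's proof, and that reduction is fine. The genuine gap is that you never produce the Bellman function, and that is the entire substance of the lemma. In fact your ansatz $B(U,V)=Q\,F(UV/Q)$ is the right shape: the paper takes $\cB(x,y)=128\,Q^{3/2}(xy)^{1/2}-(xy)^2$, i.e.\ $F(s)=128Q(\sqrt{s}-s^2)$, which has range $\lesssim Q^2$ on $\Omega_Q$. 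But the real work, which your second-order Taylor heuristic does not supply, is the verification of the \emph{finite-increment} two-point inequality $\cB\ci{X_0}(X_0)-\cB\ci{X_0}(X)\ge c\,|x-x_0|^2\,y\,y_0$ for all $X_0,X\in\Omega\ci Q$ (children can be far from the parent; infinitesimal concavity is not enough). As you correctly note, the piece $-(xy)^2$ is not concave; in the paper the regime $\sd x\,\sd y<0$ with $\sd x\ge 3x_0$ and $-\sd y\ge y_0/2$ produces a negative contribution of size $\sim x_0y_0\,|\sd x\,\sd y|$ from $-(xy)^2$, which must be absorbed by the gain $\sim Q^{3/2}y_0^{1/2}x_0^{-1/2}\sd x$ coming from the concave term $Q^{3/2}(xy)^{1/2}$, using $x_0y_0\le Q$ and $xy\le Q$; this case analysis is most of the paper's Section 6 and is precisely the step missing from your argument.

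Your back-up route does not close the gap either, because the elementary inequality it rests on is false in the relevant range: $(x-1)^2/x\le 2(x-1-\log x)$ holds only for $x\ge1$; for $0<x<1$ the reverse inequality holds (at $x=1/2$ the left side is $1/2$ while the right side is $2\log 2-1\approx0.386$), and the failure is by an unbounded factor as $x\to0^+$, since the left side grows like $1/x$ while the right side grows only like $2\log(1/x)$. In the non-homogeneous setting there is no doubling, so a child's average $\La u\Ra\ci{I'}$ can be arbitrarily small compared with $\La u\Ra\ci I$, i.e.\ the regime $x=\La u\Ra\ci{I'}/\La u\Ra\ci I<1$ is unavoidable; hence the claimed pointwise majorization of $\tau\ci I$ by the telescoping logarithmic expression breaks down, and even the weaker bound $CQ^2\log Q\,|I_0|$ is not established by this route. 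So as it stands neither branch of the proposal proves the lemma; what is needed is exactly the paper's Bellman lemma for $128Q^{3/2}(xy)^{1/2}-(xy)^2$ (or an equivalent construction) together with the telescoping you already have.
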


\begin{lm}
\label{l:CMP-02}
The collection $\rho=(\rho\ci I)\ci{I\in\cD}$,
\[
\rho\ci I:= \sum_{I'\in\ch(I)}  | \La u\Ra\ci{I'} - \La u\Ra\ci{I} |\cdot 
| \La w\Ra\ci{I'} - \La w\Ra\ci{I} |\cdot|I'|/|I|
\]
satisfies the following Carleson measure property
\begin{align}
\label{CMP-02}
\sum_{I\in\cD:\,I\subset I_0} \rho\ci I |I| \le C [w]\ci{A_2} |I_0| \qquad \forall I_0\in\cD
\end{align}
with an absolute constant $C$. 
\end{lm}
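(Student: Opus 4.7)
The plan is to prove the Carleson measure bound $\sum_{I\subset I_0}\rho\ci I |I|\le C Q |I_0|$ (with $Q:=[w]\ci{A_2}$) by a Bellman function argument. I look for a function $B:\{(W,U):W,U>0,\,WU\le Q\}\to[0,CQ]$ that is concave and whose discrete concavity deficit dominates
\[
\sum_i \alpha_i |W_i-W|\cdot|U_i-U|
\]
for every convex combination $(W,U)=\sum_i \alpha_i (W_i,U_i)$ with each $(W_i,U_i)$ in the $A_2$ region. Applying this with $(W,U)=(\La w\Ra\ci I,\La u\Ra\ci I)$, $(W_i,U_i)=(\La w\Ra\ci{I'},\La u\Ra\ci{I'})$ over $I'\in\ch(I)$, $\alpha_i=|I'|/|I|$, multiplying by $|I|$, and summing telescopingly along $I\subset I_0$ collapses the left-hand side to $|I_0|\,B(\La w\Ra\ci{I_0},\La u\Ra\ci{I_0})\le CQ|I_0|$, yielding the required bound.

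A natural first candidate is
\[
B(W,U) = Q\log(WU) - WU + C_0 Q,
\]
with $C_0$ chosen so $B\ge 0$ on the $A_2$ region. Its Hessian
\[
\begin{pmatrix} -Q/W^2 & -1 \\ -1 & -Q/U^2 \end{pmatrix}
\]
has non-negative determinant $Q^2/(WU)^2-1$ precisely when $WU\le Q$ and strictly negative trace, so $B$ is concave there. The concavity deficit splits as a non-negative Jensen piece $Q\bigl[\log(WU)-\sum_i \alpha_i \log(W_iU_i)\bigr]$ plus a signed bilinear piece $\sum_i \alpha_i \Delta W_i\,\Delta U_i$. A second-order Taylor expansion of the Jensen piece, combined with AM--GM, produces a term of order $\sum_i \alpha_i\,Q\,|\Delta W_i\Delta U_i|/(\xi_{W,i}\xi_{U,i})$ for some intermediate points $\xi_{\cdot,i}$ on the segment between the endpoints.

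The main obstacle is that $\xi_{W,i}\xi_{U,i}$ is not bounded by $CQ$ along segments inside $\{WU\le Q\}$: two points on the boundary hyperbola $WU=Q$ can be joined by a segment through interior points where the product $WU$ reaches order $Q^2$, spoiling the AM--GM step. Overcoming this requires either a more sophisticated Bellman function whose second-order structure captures the bilinear quantity $|\Delta W\,\Delta U|$ intrinsically with the correct linear constant $c/Q$ (for instance one built from $\sqrt{WU}$ or $1/(WU)$ with linear corrections, in the spirit of standard $A_2$-theory Bellman functions), or a decomposition of the children of each $I$ into regimes and tailored arguments on each. The natural split is into ``co-monotone'' children ($\Delta W_i\Delta U_i\ge 0$), where the signed bilinear piece of the deficit \emph{adds} to the Jensen piece, and ``anti-monotone'' children ($\Delta W_i\Delta U_i<0$), where I control the contribution using the observation that the mean constraints $\sum\alpha_j W_j=W$, $\sum\alpha_j U_j=U$ together with $W_iU_i\le Q$ force $\alpha_i$ to be small whenever $(W_i,U_i)$ departs markedly from $(W,U)$. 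Once the Bellman function is correctly tuned, the telescoping argument above delivers the sharp linear-in-$[w]\ci{A_2}$ Carleson bound.
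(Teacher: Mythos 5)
Your overall architecture is exactly the one the paper uses: a Bellman function on the $A_2$ region whose multi-point concavity deficit dominates $\sum_i\alpha_i|\Delta W_i|\cdot|\Delta U_i|$, fed with the averages of $w$ and $u=w^{-1}$ over the children of $I$ and then summed telescopically over $I\subset I_0$. However, as written the argument has a genuine gap: the proof stops precisely at the point where all the work lies. Your concrete candidate $B(W,U)=Q\log(WU)-WU+C_0Q$ cannot deliver the stated lemma even if the deficit inequality were established. On the region $\{0<WU\le Q\}$ (the paper's $\Omega\ci Q$) no choice of $C_0$ makes it nonnegative, and on the range actually attained by averages, $1\le WU\le Q$ (Cauchy--Schwarz), its maximum is of order $Q\log Q$; since the telescoping sum is bounded by $|I_0|\sup B$, this would give at best $C[w]\ci{A_2}(1+\log[w]\ci{A_2})$, not the linear bound \eqref{CMP-02}. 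More importantly, you correctly identify the central obstruction --- the region $\{WU\le Q\}$ is not convex, so infinitesimal concavity plus Taylor expansion along the segment fails because $WU$ can reach order $Q^2$ on the segment --- but you do not overcome it: the proposal ends with ``once the Bellman function is correctly tuned,'' and that tuning is the entire content of the paper's key lemma. Your fallback heuristic, that the averaging constraints force $\alpha_i$ to be small when a child departs far from the parent, cannot close the gap on its own: in the anti-monotone regime both the loss you must absorb and the gain you must produce scale linearly in $\alpha_i$, so smallness of $\alpha_i$ buys nothing.

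What is missing, concretely, is the explicit function and the two-point estimate for it. The paper takes $\cB(x,y)=4Q^{1/2}(xy)^{1/2}-xy$, which satisfies $0\le\cB\le 4Q$ on $\Omega\ci Q$ (hence the linear constant), and proves the pointwise inequality $\cB\ci{X_0}(X_0)-\cB\ci{X_0}(X)\ge c\,|x-x_0|\cdot|y-y_0|$ for \emph{all} $X_0,X\in\Omega\ci Q$, with no requirement that the segment stay in $\Omega\ci Q$ --- this is how the non-convexity is bypassed. The proof is by exact algebra, not by Hessian integration along the segment: for the $-xy$ part, subtracting the linear term gives the deficit exactly equal to $\sd x\,\sd y$, which is the desired quantity when $\sd x\,\sd y\ge0$ and is a loss of size $|\sd x\,\sd y|$ otherwise; in the anti-monotone case this loss is beaten by the quantitative concavity gain of $(xy)^{1/2}$, namely $\sqrt{x_0y_0}-\sqrt{x_0y_0+s}\ge -s/(2\sqrt{x_0y_0})$ applied with $s=\theta_1\theta_2\sd_x\sd_y$, which after multiplication by $4Q^{1/2}$ and the single use of $x_0y_0\le Q$ dominates the loss with a factor $2$ to spare (only $X_0\in\Omega\ci Q$ and positivity of the other points are needed). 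So your instinct that a $\sqrt{WU}$-based function with a linear correction is the right object is sound, but without producing such a function, verifying its $O(Q)$ size, and proving the two-point deficit inequality off the convexity crutch, the lemma is not proved.
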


Let us show that these lemmas imply Theorem \ref{t:main-01}. We already reduced the theorem to proving the estimates \eqref{test-05} and \eqref{test-06}. We can write right hand side of \eqref{test-05} as
\begin{align*}
\sum_{I\in \cD:\, I \subset I_0} \La w \Ra\ci I^{-1}\sum_{I'\in\ch(I)} | \La u\Ra\ci{I'} - \La u\Ra\ci{I} |^2 \La w \Ra\ci I \La w\Ra\ci{I'} |I'| = \sum_{I\in \cD:\, I \subset I_0} \La w \Ra\ci I^{-1} \tau\ci I |I| \,.
\end{align*}
Lemma \ref{l:1over-01} applied to $h=u\1\ci{I_0}$ implies that the function $F$ on $\cD$
\[
F(I)  =\left\{
\begin{array}{ll} 1/\La w \Ra\ci I, \qquad &  I\in \cD(I_0) ;\\
0, & I\notin \cD(I_0) ,\end{array}\right.  
\]
belongs to $L^1(\cD, S^\infty)$ and that 
\begin{align}
\label{normF}
\| F\|\ci{L^1(\cD, S^\infty)} \le 2 \|u\1\ci{I_0}\|\ci{L^1} = 2 \La u \Ra\ci{I_0}\,.
\end{align}

On the other hand the Carleson measure property \eqref{CMP-01} of Lemma \ref{l:CMP-01} means that the function $G$ on $\cD$, 
\[
G(I)= \tau\ci I \,, \qquad I\in \cD
\]
belongs to $L^\infty(\cD, S^1)$, and that 
\begin{align}
\label{normG}
\| G\|\ci{L^\infty(\cD, S^1)}  \le C[w]\ci{A_2}^2 \,.
\end{align}
Combining Lemma \ref{l:L1-L^infty} with estimates \eqref{normF} and \eqref{normG} we immediately get \eqref{test-05}.

To prove \eqref{test-06} we first apply Lemma \ref{l:bilin} with $f=\1\ci{I_0}$ and $\wt g:=g\1\ci{I_0}$ instead of $g$. Then we get that the function $F$ on $\cD$, 
\[
F(I) :=
 \A_u f (I) \A_w \wt g(I) = 
{\La \1\ci{I_0}g w\Ra\ci I}/{\La  w\Ra\ci I} , \qquad   I\in\cD
\]
(recall that $u:=w^{-1}$) belongs to $L^1(\cD, S^\infty) $ and that 
\[
\|F \|\ci{ L^1(\cD, S^\infty) } \le 4\|f\|\ci{L^2(u)} \|\wt g\|\ci{L^2(w)} = 4 \La u \Ra\ci{I_0}^{1/2} \La gw\Ra\ci{I_0}^{1/2} |I_0| \,.
\]

Lemma \ref{l:CMP-02} implies that the function $G$ on $\cD$, $G(I) := \rho\ci I$, $I\in\cD$, is in $L^\infty(\cD, S^1)$ and that 
\[
\|G\|\ci{L^\infty(\cD, S^1)} \le C [w]\ci{A_2}^2 \,.
\]
Combining Lemma \ref{l:L1-L^infty} with the above estimates and summing only over all $I\in\cD(I_0)$ we get the desired estimate \eqref{test-06}. \hfill \qed

So, we reduced proof of Theorem \ref{t:main-01} to proving Lemmas \ref{l:CMP-01} and \ref{l:CMP-02}.

\section{Bellman functions and the proof of the Carleson measure properties}

For a smooth function $\cB$ (defined on an open convex set $\Omega\subset\R^d$) and a point $X_0\in \Omega$, define $\cB_{X_0}$ as 
\begin{align}
\label{B_x0} 
\cB_{X_0}(x) = \cB(X) - \cB'(X_0) (X-X_0). 
\end{align}

We need the following	 trivial Lemma. 

\begin{lm}
\label{l:conc-triv}
Let $\cB$ be a smooth function defined in an open convex set $\Omega$, and let $X_k\in \Omega$, $k\ge 0$ satisfy
\[
X_0=\sum_{k\ge1} \theta_k X_k, \qquad \theta_k \ge 0, \quad \sum_k\theta_k =1. 
\]
Then 
\begin{align}
\label{conc-comb}
\cB(X_0) - \sum_{k\ge1} \theta_k \cB(X_k) = \sum_{k\ge 1} \theta_k (\cB_{X_0}(X_0) - \cB_{X_0} (X_k)) . 
\end{align}
\end{lm}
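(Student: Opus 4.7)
The proof is essentially a direct algebraic verification, so the ``plan'' is really to identify the two cancellations that make both sides agree. First, I would note that at the point $X_0$ itself the correction term in \eqref{B_x0} vanishes, giving the identity
\[
\cB_{X_0}(X_0) = \cB(X_0) - \cB'(X_0)(X_0-X_0) = \cB(X_0),
\]
so the first sum on the right-hand side of \eqref{conc-comb} collapses, via $\sum_{k\ge 1}\theta_k = 1$, to exactly $\cB(X_0)$.

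Next I would expand the second sum using the definition of $\cB_{X_0}$:
\[
\sum_{k\ge 1}\theta_k \cB_{X_0}(X_k) \;=\; \sum_{k\ge 1}\theta_k \cB(X_k) \;-\; \cB'(X_0)\Bigl(\sum_{k\ge 1}\theta_k X_k - X_0\sum_{k\ge 1}\theta_k\Bigr).
\]
The hypothesis $X_0=\sum_{k\ge 1}\theta_k X_k$ together with $\sum_{k\ge 1}\theta_k=1$ forces the bracketed quantity to be zero, so the linear correction disappears and only $\sum_{k\ge1}\theta_k\cB(X_k)$ survives.

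Subtracting the two resulting expressions yields $\cB(X_0)-\sum_{k\ge 1}\theta_k\cB(X_k)$, which matches the left-hand side of \eqref{conc-comb}. There is no genuine obstacle here; the only thing to be mildly careful about is that the identity does not require any concavity or smoothness beyond the mere existence of $\cB'(X_0)$, and that the series $\sum_k\theta_k\cB(X_k)$ converges (which is implicit in the statement since the identity is to be understood whenever both sides make sense). This lemma is stated precisely to be applied later with a concave $\cB$, where the left-hand side will be nonnegative and the right-hand side will be rewritten in terms of second-order increments of $\cB$.
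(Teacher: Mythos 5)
Your verification is correct, and it is exactly the computation the paper has in mind (its proof is simply ``This is obvious''): the linear correction $\cB'(X_0)(X-X_0)$ vanishes at $X=X_0$ and sums to zero over the $X_k$ by the convex-combination hypothesis, which is the whole content of the identity. Nothing further is needed.
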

\begin{proof}
This is obvious.
\end{proof}

In what follows we will use $Q=[w]\ci{A_2}$, but for now $Q$ is just an arbitrary constant, $Q\ge1$.

Define 
\[
\Omega\ci Q := \{ X=(x,y)\in \R^2: 0< xy \le Q\}.
\]
\begin{lm}
\label{l:bell-01}
Let 
\[
\cB(X)=\cB(x,y) := 4Q^{1/2} (xy)^{1/2} - xy.
\]
Then
\begin{enumerate}
\item $0 \le \cB(X) \le 4 Q$ for all $X\in \Omega\ci Q$.

\item For $X_0, X \in \Omega\ci Q$
\begin{align}
\label{MDI-01}
 \cB\ci{X_0} (X_0) - \cB\ci{X_0} (X) \ge c |x-x_0|\cdot |y-y_0|, 
\end{align}
where $c$ is an absolute constant. 
\end{enumerate}
\end{lm}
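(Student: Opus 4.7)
The plan is to handle the two parts separately, with the upper/lower bound in (i) being elementary and the differential inequality (ii) reducing to a pointwise concavity estimate on the Hessian of $\cB$.

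For part (i), I would parametrize by $t=\sqrt{xy}\in(0,Q^{1/2}]$ and note that $\cB(x,y)=f(t)$ where $f(t)=4Q^{1/2}t-t^2$. On the interval $(0,Q^{1/2}]$ we have $f(t)=t(4Q^{1/2}-t)\ge 3Q^{1/2}t\ge 0$, giving the lower bound. The unconstrained maximum of $f$ on $\R$ occurs at $t=2Q^{1/2}$ with value $4Q$, giving the upper bound.

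For part (ii), I would rewrite the left side as a Taylor remainder. Since $\cB_{X_0}(X_0)=\cB(X_0)$, one has
\[
\cB_{X_0}(X_0)-\cB_{X_0}(X)=-\bigl[\cB(X)-\cB(X_0)-\cB'(X_0)(X-X_0)\bigr]=-\int_0^1(1-s)\,\cB''(Y_s)(X-X_0,X-X_0)\,ds,
\]
where $Y_s=X_0+s(X-X_0)$. Because $\Omega\ci Q$ is convex, $Y_s\in\Omega\ci Q$ for all $s\in[0,1]$. Thus it suffices to establish the pointwise bound
\[
-\cB''(Y)(\xi,\eta,\xi,\eta)\ge 2|\xi||\eta|\qquad\text{for all }Y=(x,y)\in\Omega\ci Q,\ (\xi,\eta)\in\R^2,
\]
after which integrating against $(1-s)\,ds$ yields the claim with $c=1$ (writing $\xi=x-x_0$, $\eta=y-y_0$).

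The computation of the Hessian gives $\partial_x^2\cB=-Q^{1/2}y^{1/2}x^{-3/2}$, $\partial_y^2\cB=-Q^{1/2}x^{1/2}y^{-3/2}$, and $\partial_{xy}\cB=Q^{1/2}(xy)^{-1/2}-1$. Setting $a=Q^{1/2}y^{1/2}x^{-3/2}$, $b=Q^{1/2}x^{1/2}y^{-3/2}$, $c_0=Q^{1/2}(xy)^{-1/2}$, one has the key identity $\sqrt{ab}=c_0$, and the constraint $xy\le Q$ gives $c_0\ge 1$. Then
\[
-\cB''(Y)(\xi,\eta,\xi,\eta)=a\xi^2+b\eta^2+2(1-c_0)\xi\eta.
\]
When $\xi\eta\ge 0$, AM--GM gives $a\xi^2+b\eta^2\ge 2c_0\,\xi\eta$, so the whole expression is at least $2c_0\,\xi\eta+2(1-c_0)\xi\eta=2|\xi||\eta|$. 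When $\xi\eta\le 0$, the cross term $2(1-c_0)\xi\eta$ is already nonnegative (as $c_0\ge 1$), and AM--GM gives $a\xi^2+b\eta^2\ge 2c_0|\xi||\eta|\ge 2|\xi||\eta|$. In either case we obtain the desired bound, and part (ii) follows.

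The main (and only real) obstacle is choosing the correct splitting by sign of $\xi\eta$ to handle the indefinite cross term; once the identity $\sqrt{ab}=c_0$ and the inequality $c_0\ge 1$ (which is exactly the condition $xy\le Q$ defining $\Omega\ci Q$) are noticed, the rest is AM--GM.
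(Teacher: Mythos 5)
Part (i) is fine, and so is your same-sign case in part (ii) (there, as you note, the cross term contributes $2\xi\eta-2c_0\xi\eta$ and AM--GM restores $2c_0\xi\eta$, so the pointwise bound holds on the whole first quadrant with no constraint needed). The gap is in the opposite-sign case, and it comes from the assertion that $\Omega\ci Q$ is convex: the region $\{x,y>0,\ xy\le Q\}$ lies \emph{below} the hyperbola $xy=Q$ and is not convex, so the segment $Y_s=X_0+s(X-X_0)$ need not stay in $\Omega\ci Q$. Your pointwise estimate $-\cB''(Y)(\xi,\eta)\ge 2|\xi||\eta|$ in the case $\xi\eta<0$ genuinely uses $c_0\ge1$, i.e.\ $xy\le Q$ at the point $Y$, and it fails outside $\Omega\ci Q$: take $X_0=(T,Q/T)$ and $X=(Q/T,T)$ with $T$ large; both endpoints lie on the hyperbola, $\sd x\,\sd y<0$, but near the midpoint $\approx(T/2,T/2)$ one has $c_0\approx 2Q^{1/2}/T\ll1$ and the quadratic form in the direction $(\sd x,\sd y)$ is about $-2T^2$, i.e.\ very negative. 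The inequality \eqref{MDI-01} is still true for this pair (in fact with constant $1$), but only because the negative middle portion of the integral is compensated by the endpoint contributions, and your argument does not show this; exactly this kind of compensation is what makes the ``hard case'' of the companion Lemma \ref{l:bell-02} in the paper nontrivial.

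The paper avoids the segment/Hessian route in the off-sign case altogether: it splits $\cB=4Q^{1/2}\cB^{[1]}+\cB^{[2]}$ with $\cB^{[1]}=(xy)^{1/2}$, $\cB^{[2]}=-xy$, computes the contribution of the quadratic piece $\cB^{[2]}$ \emph{exactly} (Taylor with remainder is an identity for quadratics, so no domain issue arises: it equals $\sd x\,\sd y$, negative in this case), and then compensates it using the concavity of $\cB^{[1]}$ on the whole first quadrant together with the scalar concavity of $t\mapsto t^{1/2}$, invoking the constraint only at the base point through $x_0y_0\le Q$ (via the prefactor $4Q^{1/2}\ge 4(x_0y_0)^{1/2}$). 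To repair your proof you would need either to adopt such a decomposition, or to prove the integrated (not pointwise) inequality along segments that leave $\Omega\ci Q$, which is a genuinely harder estimate.
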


\begin{lm}
\label{l:bell-02}
Let 
\[
\cB(X)=\cB(x,y) := 128 Q^{3/2} (xy)^{1/2} - (xy)^2.
\]
Then
\begin{enumerate}
\item $0 \le \cB(X) \le 128 Q^2$ for all $X\in \Omega\ci Q$.

\item For $X_0, X \in \Omega\ci Q$
\begin{align*}
 \cB\ci{X_0} (X_0) - \cB\ci{X_0} (X) \ge c |x-x_0|^2 yy_0, 
\end{align*}
where $c$ is an absolute constant. 
\end{enumerate}
\end{lm}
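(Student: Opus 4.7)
Part~(1) follows from viewing $\cB(x,y) = \Phi(u)$ with $u = xy$ and $\Phi(t) = 128 Q^{3/2}\sqrt t - t^2$. Since $\Phi'(t) = 64 Q^{3/2} t^{-1/2} - 2t$ has its unique positive zero at $t_\ast = 32^{2/3} Q > Q$, the function $\Phi$ is increasing on $(0, Q]$, so $\Phi(u) \le \Phi(Q) = 127 Q^2 \le 128 Q^2$; the non-negativity $\Phi(u) \ge 0$ reduces to $u^{3/2} \le 128 Q^{3/2}$, which holds since $u \le Q$.

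For part~(2), I would start from $\nabla\cB = \Phi'(u)(y, x)$ and the elementary identity $y_0\, \delta x + x_0\, \delta y = \delta u - \delta x\, \delta y$ (where $\delta x := x - x_0$, $\delta y := y - y_0$, $\delta u := u - u_0$) to write
\[
\Delta := \cB\ci{X_0}(X_0) - \cB\ci{X_0}(X) = \bigl[\Phi(u_0) - \Phi(u) + \Phi'(u_0)\delta u\bigr] - \Phi'(u_0)\, \delta x\, \delta y.
\]
A direct computation (treating the $\sqrt{\cdot}$ and $-t^2$ contributions to $\Phi$ separately) gives
\[
\Phi(u_0) - \Phi(u) + \Phi'(u_0)\delta u = 64 Q^{3/2}\frac{(\sqrt u - \sqrt{u_0})^2}{\sqrt{u_0}} + (\delta u)^2,
\]
while the key algebraic identity coming from the two equivalent expansions $\delta u = y_0\delta x + x\delta y = y\delta x + x_0 \delta y$,
\[
(\delta u)^2 = (y_0\, \delta x + x\, \delta y)(y\, \delta x + x_0\, \delta y) = y y_0 (\delta x)^2 + (u + u_0)\, \delta x\, \delta y + x x_0 (\delta y)^2,
\]
combines with the above to produce
\[
\Delta = \mathcal R + y y_0 (\delta x)^2 + x x_0 (\delta y)^2 + \bigl[u + u_0 - \Phi'(u_0)\bigr]\delta x\, \delta y,
\]
with $\mathcal R := 64 Q^{3/2}(\sqrt u - \sqrt{u_0})^2/\sqrt{u_0} \ge 0$.

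The desired bound $\Delta \ge y y_0 (\delta x)^2$ (so that $c = 1$ suffices) will follow by splitting on the sign of $\delta x\, \delta y$. For $u_0, u \in (0, Q]$, the coefficient $u + u_0 - \Phi'(u_0) = u + 3 u_0 - 64 Q^{3/2}/\sqrt{u_0} \le 4Q - 64Q = -60 Q$ is negative; so when $\delta x\, \delta y \le 0$ the cross term is non-negative and the claim is immediate. When $\delta x\, \delta y > 0$, the AM--GM estimate $y y_0 (\delta x)^2 + x x_0 (\delta y)^2 \ge 2\sqrt{u u_0}\, \delta x\, \delta y$ inserted into the identity for $(\delta u)^2$ yields $(\delta u)^2 \ge (\sqrt u + \sqrt{u_0})^2\, \delta x\, \delta y$, and hence $\mathcal R \ge 64 Q^{3/2}\, \delta x\, \delta y/\sqrt{u_0}$; since also $|u + u_0 - \Phi'(u_0)| \le 64 Q^{3/2}/\sqrt{u_0}$, the term $\mathcal R$ absorbs the negative cross term, completing the case.

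The main obstacle this algebraic route is designed to bypass is the non-convexity of $\Omega\ci Q$: a Taylor expansion of $\Delta$ via the Hessian of $\cB$ along the segment from $X_0$ to $X$ would fail, as that segment can escape $\Omega\ci Q$ and even the larger region in which $\cB$ is concave. The identities above depend only on endpoint data and avoid any intermediate point.
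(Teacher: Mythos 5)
Your proof is correct, and it takes a genuinely different route from the paper's. I checked the key steps: the identity $\nabla\cB(X_0)\cdot(X-X_0)=\Phi'(u_0)(y_0\delta x+x_0\delta y)=\Phi'(u_0)(\delta u-\delta x\,\delta y)$; the exact linearization error $\Phi(u_0)-\Phi(u)+\Phi'(u_0)\delta u=64Q^{3/2}(\sqrt u-\sqrt{u_0})^2/\sqrt{u_0}+(\delta u)^2$; the factorization $(\delta u)^2=(y_0\delta x+x\,\delta y)(y\,\delta x+x_0\delta y)=yy_0(\delta x)^2+(u+u_0)\delta x\,\delta y+xx_0(\delta y)^2$; and the sign/size estimates for the coefficient $u+3u_0-64Q^{3/2}u_0^{-1/2}$ together with the absorption of the cross term by $\mathcal{R}$ when $\delta x\,\delta y>0$ --- all are valid (using, as the paper implicitly does, that $x,y,x_0,y_0>0$). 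In fact you obtain the stronger symmetric bound $\cB_{X_0}(X_0)-\cB_{X_0}(X)\ge yy_0(\delta x)^2+xx_0(\delta y)^2$, which is exactly the paper's intermediate inequality \eqref{MDI-mod-01}, and with the explicit constant $c=1$. The paper instead splits $\cB=128Q^{3/2}\cB^{[1]}+\cB^{[2]}$ with $\cB^{[1]}=(xy)^{1/2}$ concave and $\cB^{[2]}=-(xy)^2$ not concave, and runs a second-order Taylor argument with integral remainder along the segment $X_0+t\,\delta X$; because $x(t)y(t)$ may exceed $Q$ along that segment, this forces a case analysis ending in a ``hard case'' ($\delta x\ge 3x_0$, $-\delta y\ge y_0/2$) where an endpoint computation for $\cB^{[2]}$ bounds the loss by a multiple of $x_0y_0|\delta x\,\delta y|$ and an integral lower bound on the Hessian of $\cB^{[1]}$ over $t\in[0,1/2]$ compensates it, producing an unspecified absolute constant. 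Your argument reduces everything to one-variable endpoint identities in $u=xy$ (a Bregman-type computation for $\Phi(t)=128Q^{3/2}\sqrt t-t^2$) plus algebra: no integration along the segment, no hard-case thresholds, and a clean explicit constant. What it gives up is the general Bellman template (concavity/size estimates along segments) that the paper's proof illustrates and that carries over to Bellman functions admitting no such exact one-variable reduction; also, your closing remark slightly overstates the obstruction --- the paper's segment argument does not fail, it simply has to handle the excursion of $x(t)y(t)$ beyond $Q$ by hand, which is precisely its hard case.
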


Using these lemmas we can prove Lemmas \ref{l:CMP-02} and \ref{l:CMP-01} by applying the standard Bellman function technique.  In fact,  let us plug into inequalities of Lemmas \ref{l:bell-01} and \ref{l:conc-triv} the following data: $x_0= \La u\Ra_I, y_0=\La w\Ra_I,
x_k =\La u\Ra_{I'}, y_k =\La w\Ra_{I'}$, where $I'$ is the $k$-th children of $I$ (enumeration is not important). Now look at the conjunction of   \eqref{conc-comb}, \eqref{MDI-01} in this new form. Clearly $\theta_k=\frac{|I'|}{|I|}$. Multiply the resulting inequality by $|I|$.

Then  we will get
\[
\rho\ci I |I|= \sum_{I'\in\ch(I)}  | \La u\Ra\ci{I'} - \La u\Ra\ci{I} |\cdot 
| \La w\Ra\ci{I'} - \La w\Ra\ci{I} |\cdot|I'| \le |I| B(X_0) -\sum_{I'\in \ch(I)} |I'| B(X_k)\ .
\]
Here $X_0=(x_0, y_0) = (\La u\Ra_I, \La w\Ra_I)$ and $X_k=(x_k, y_k)= (\La u\Ra_{I'},\La w\Ra_{I'})$, where $I'$ is the $k$-th children of $I$ (enumeration is not important).

Notice that in the right-hand-side we have a telescopic term. If we start to sum up this inequality  over $I\subset I_0$ we get immediately Lemma \ref{l:CMP-02}, if we use the function $\cB$ from Lemma \ref{l:bell-01}. Lemma \ref{l:CMP-02} is then proved.

To prove Lemma \ref{l:CMP-01} we repeat this argument varbatim, but now we use the conjunction of  Lemmas \ref{l:bell-02} and \ref{l:conc-triv} and we use  function $\cB$ from Lemmas \ref{l:bell-02}.

The Lemmas \ref{l:bell-01} and \ref{l:bell-02} will be proved in the next section. Along with the outer measure spaces these lemmas are the main tools of our proof.
 
\section{Investigation of Bellman functions}
\label{BellmanF}

In this section we will prove Lemmas \ref{l:bell-01} and \ref{l:bell-02}. 

\subsection{Proof of Lemma \ref{l:bell-02}}
Statement \cond1 of the lemma is obvious, only statement \cond2 needs proving. 

\subsubsection{Preliminaries}
We will prove a stronger inequality, namely
\begin{align}
\label{MDI-mod-01}
\cB\ci{X_0} (X_0)- \cB\ci{X_0}(X) \ge c(yy_0 (\sd x)^2 + xx_0 (\sd y)^2), 
\end{align}
where $\sd x:= x-x_0$, $\sd y: = y-y_0$.

Define $\cB{[1]}(X) = (xy)^{1/2}$, $\cB^{[2]}(X) := -(xy)^2$, so 
\[
\cB=128 Q^{3/2} \cB^{[1]}+\cB^{[2]} . 
\]
Note that for $x, y>0$
\begin{align}
\label{d2B_1}
-d^2\cB^{[1]}:= \frac14 \left(\begin{array}{c} {dx}\\ {dy}\end{array}\right)^T \left(\begin{array}{cc}  x^{-3/2}y^{1/2} & -x^{-1/2}y^{-1/2} \\
-x^{-1/2}y^{-1/2} &  x^{1/2} y^{-3/2} \end{array} \right)
\left(\begin{array}{c} {dx}\\ {dy}\end{array}\right) \ge 0
\end{align}
so the function $\cB^{[1]}$ is concave for all $x,  y>0$. 

Computing $d^2\cB^{[2]}$ we get 
\begin{align}
\label{d2B_2}
-d^2\cB^{[2]} = 2 \left(\begin{array}{c} {dx}\\ {dy}\end{array}\right)^T
\left(\begin{array}{cc}  y^2 & 2xy \\
2xy &   x^2 \end{array} \right)
\left(\begin{array}{c} {dx}\\ {dy}\end{array}\right) \,.
\end{align}
Note, that the function $\cB^{[2]}$ is not concave. 

Denote 
\[
X(t):=(x(t),y(t)) = (x_0 + t\sd x, y_0+t\sd y)= X_0 + t\sd X, \qquad 0\le t\le 1, 
\] 
and let $\Phi\ci{\ssd X} (t) := \cB (X_0+t\sd X)$, $\Phi^{[1,2]}\ci{\ssd X} (t) =\cB^{[1,2]}(X(t))$. 

Recall that by the integral form of Taylor's remainder we have for a function $\f$ on an interval
\begin{align}
\label{TRF-I}
\f(x) - \f(x_0) - \f'(x_0) (x-x_0) = \int_{x_0}^x \f''(t) (x-t) dt . 
\end{align}

\subsubsection{The easy cases} 
If $\sd x \sd y \ge 0$, we conclude using \eqref{d2B_2} and \eqref{d2B_1} (together with the fact that $-d^2\cB_1\ge 0$) that 
\begin{align}
\label{Phi''easy}
-\Phi\ci{\ssd X}''(t) \ge y(t)^2(\sd x)^2 + x(t)^2 (\sd y)^2, 
\end{align}
Recalling that $x(t) =x_0 +t\sd x$, $y(t)= x_0+t\sd y$ we get
\begin{align}
\label{int-x^2}
\int_0^1 x(t)^2 (1-t) dt & \ge c (x_0+\sd x/2)^2 \ge c x_0 x,  \\
\label{int-y^2}
\int_0^1 y(t)^2 (1-t) dt & \ge c (y_0+\sd y/2)^2 \ge c y_0 y;
\end{align}
the constant $c$ can be computed explicitly. The above inequalities prove \eqref{MDI-mod-01} (for the case $\sd x \sd y\ge0$).

Let us now treat the case $\sd x\sd y<0$. Because of the symmetry of \eqref{MDI-mod-01} we can assume without loss of generality that $\sd x>0$ and $\sd y<0$. 

Consider first the simple case when 
\begin{align}
\label{xy<4Q}
x(t)y(t) \le 4 Q\qquad \text{ for all } t\in[0,1]. 
\end{align}
We will discuss later for which $\sd x$ and $\sd y$ this happens, but for now we will continue with the estimates. Under the assumption  \eqref{xy<4Q} we have 
\[
8 Q^{3/2} (x(t)y(t))^{-1/2} \ge x(t)y(t) 
\]
so the term including $\sd x \sd y$ in $\Phi\ci{\ssd X}''(t)$ is non-negative. But that means the estimate \eqref{Phi''easy} holds in this case as well, so we again get the conclusion using \eqref{int-x^2} and \eqref{int-y^2}.  

We claim that  \eqref{xy<4Q} holds if either $\sd x/x_0\le 3$ or $-\sd y/y_0\le 1/2$ (all under the assumption $\sd x>0$, $\sd y<0$, $xy \le Q$, $x_0y_0\le Q$). Indeed, if $\sd x/x_0\le 3$, then $x(t)\in [x_0, 4x_0]$. Since $\sd y<0$, we get that $y(t)\le y_0$, so 
\[
x(t)y(t) \le 4x_0 y_0 \le 4Q. 
\] 
On the other hand, if $-\sd y /y_0\le 1/2$, then $y(t)\le 2 y$, and since $x(t) \le x$ we get
\[
x(t)y(t)\le 2x(t) y \le 2xy \le 2Q\le 4Q.
\]

\subsubsection{The hard case}
So, it remains to investigate the hard case 
\begin{align}
\label{hard-cond}
\frac{\sd x}{x_0}\ge 3, \qquad  -\frac{\sd y}{y_0} \ge \frac12.
\end{align}
Denote $a:=\sd x/x_0$, $b=-\sd y/y_0$ and consider the function $\f$, $\f(t) = \bigl( (1+at)(1-bt)\bigr)^2$. We can write
\begin{align*}
\f(t)^2 & = \bigl(1+t(b-a) +t^2 ab\bigr)^2 
\\
&
= 1+t^2(a-b)^2 + t^4(ab)^2 + 2t (a-b) -2t^2 ab - 2t^3(a-b)ab. 
\end{align*}
Subtracting linear term $2t (a-b)$ we get for $t=1$
\begin{align*}
\f(1) - \f(0) - \f'(0)\cdot 1 & = a^2 + b^2 -2ab + (ab)^2 -2ab - 2a^2 b + 2 ab^2 \\
&= a^2 (1-b)^2 + b^2 -4ab + 2ab^2 \ge -4ab. 
\end{align*}
Multiplying this inequality by $(x_0y_0)^2$ and recalling the definition of $a$ and $b$ we get 
\begin{align}
\notag
\Phi^{[2]}\ci{\ssd X}(0) + (\Phi^{[2]}\ci{\ssd X})'(0)\cdot 1 - \Phi^{[2]}\ci{\ssd X}(1) & \ge 4 x_0y_0 \sd x \sd y, 
\intertext{or, equivalently}
\label{Loss-B2}
\cB^{[2]}\ci{X_0} (X_0) - \cB^{[2]}\ci{X_0}(X) & \ge 4 x_0y_0 \sd x \sd y. 
\end{align}
Note that the term in the right hand side has the wrong sign: it is negative, and we would like to have an estimate  below by a positive quantity. But we will show, that all the ``damage'' done by this term will be compensated by what we gain from $128 Q^{3/2}\cB^{[1]}$.

Using the Taylor remainder formula \eqref{TRF-I} we get 
\begin{align*}
\cB^{[1]}\ci{X_0} (X_0) - \cB^{[1]}\ci{X_0} (X) = \int_{0}^1 (-\Phi\ci{\ssd X}^{[1]})''(t) \cdot(1-t) \, dt .
\end{align*}
Since $\sd x\sd y<0$ the off-diagonal terms in the Hessian \eqref{d2B_1} give us a non-negative contribution, so 
\[
(-\Phi\ci{\ssd X}^{[1]})''(t) \ge (\sd x)^2 y(t)^{1/2} x(t)^{-3/2}. 
\]

We can estimate
\begin{align*}
\frac1{(\sd x)^2}\int_{0}^1 (-\Phi\ci{\ssd X}^{[1]})''(t) \cdot(1-t) \, dt & \ge \frac12
\int_{0}^{1/2} (-\Phi\ci{\ssd X}^{[1]})''(t)  \, dt 
\\
&\ge \frac12\int_{0}^{1/2} y(t)^{1/2} x(t)^{-3/2}\, dt
\\
&\ge  \frac{y(1/2)^{1/2}}{2}\int_{0}^{1/2} (x_0 + t\sd x)^{-3/2}  \, dt 
\\
& = - {y(1/2)^{1/2}} (x_0 + t\sd x)^{-1/2} (\sd x)^{-1} \Bigm|_{t=0}^{t=1/2} 
\\
&  = \alpha {y(1/2)^{1/2}} x_0^{-1/2} (\sd x)^{-1};
\end{align*}
here  $\alpha = 1- (1+3/2)^{-1/2} = 1-(2/5)^{1/2}$. 
We can estimate  
\begin{align}
\notag
\cB^{[1]}\ci{X_0} (X_0) - \cB^{[1]}\ci{X_0} (X) & \ge 2^{-1/2} \alpha y_0^{1/2} x_0^{-1/2} (\sd x)^{-1} (\sd x)^2 
\\
\label{gainB_1}
& \ge \frac14  y_0^{1/2} x_0^{-1/2} \sd x
\end{align}
Therefore, since $x_0y_0\le Q$
\begin{align*}
16 Q^{3/2} (\cB^{[1]}\ci{X_0} (X_0) - \cB^{[1]}\ci{X_0} (X)) & \ge 4 y_0^{1/2} x_0^{-1/2} \sd x 
\\
&  = 4 Q^{3/2} y_0^{1/2}|\sd y|^{-1} x_0^{-1/2} |\sd x\sd y|  \\
& \ge
4 Q^{3/2} y_0^{-1/2} x_0^{-1/2} |\sd x\sd y|\ge 
\\
& \ge 4 (x_0 y_0)^{3/2} y_0^{-1/2} x_0^{-1/2} |\sd x\sd y| = 4 x_0y_0 |\sd x\sd y|\,,
\end{align*}
so negative contribution \eqref{Loss-B2} of $\cB^{[2]}$ to \eqref{MDI-mod-01} is compensated by the contribution of the term $16Q^{3/2}\cB^{[1]}$. We then have the contribution of the term $112 Q^{3/2}\cB^{[1]}$ remaining. the contribution of  of the term $48 Q^{3/2}\cB^{[1]}$ gives us   by \eqref{gainB_1}  
\begin{align}
\notag
48 \left(\cB^{[1]}\ci{X_0} (X_0)- \cB^2{[1]}\ci{X_0}(X) \right) & \ge 12  Q^{3/2} y_0^{1/2} x_0^{-1/2} \sd x
\\
\notag
&\ge 12 xy \, (x_0y_0)^{1/2} y_0^{1/2} x_0^{-1/2} \sd x 
\\
\label{MDI-mod-02}
& = 12 yy_0 x \sd x\ge 12 y y_0 (\sd x)^2;
\end{align}
in this estimate we used both conditions $xy\le Q$ and $x_0y_0\le Q$. 

Let us now estimate the contribution of the remaining $64 Q^{3/2} \cB^{[1]}$ a bit differently. Again using \eqref{gainB_1} we get 
\begin{align}
\notag
64 \left(\cB^{[1]}\ci{X_0} (X_0)- \cB^2{[1]}\ci{X_0}(X) \right)  
& \ge 16  Q^{3/2} y_0^{1/2} x_0^{-1/2} \sd x
\\
\notag 
& \ge 16  Q^{3/2} y_0^{1/2} x_0^{-1/2} (\sd x)\, y_0^{-2} (\sd y)^2
\\
\notag 
& \ge 16 Q^{3/2} y_0^{-3/2} x_0^{-1/2} (3 x/4)\,(\sd y)^2 
\\
\notag 
& \ge 12 (x_0y_0)^{3/2}  y_0^{-3/2} x_0^{-1/2} x \,(\sd y)^2  \\ 
\label{MDI-mod-03}
& \ge 12 x_0 x \,(\sd y)^2 . 
\end{align}
Combining \eqref{MDI-mod-02} and \eqref{MDI-mod-03} we get \eqref{MDI-mod-01} for the hard case $\sd x\ge 3 x_0$, $-\sd y \ge y_0/2$. \hfill \qed

\subsection{Proof of Lemma \ref{l:bell-01}}
Proof of this lemma is easier then the proof of Lemma \ref{l:bell-02}. 
Again the statement \cond1 is trivial, we only need to prove \cond2. 

Denote $\cB^{[1]}(X) = (xy)^{1/2}$, $\cB^{[2]}(X) = - xy$, so $\cB= 4 Q^{1/2} \cB^{[1]} + \cB^{[2]}$. As we discusses above, see \eqref{d2B_1}, the function $\cB^{[1]}$ is concave for all $x, y>0$. 

Consider first the case $\sd x\sd y \ge 0$, where, recall $\sd x =(x-x_0)$, $\sd y=y-y_0$. For $t\in[0,1]$ we can write
\begin{align*}
(x_0 + t\sd x)(y_0 + t\sd y) = x_0y_0 +  (\sd x + \sd y) t  +  \sd x \sd y t^2, 
\end{align*}
so subtracting the linear in $t$ term $ (\sd x + \sd y) t$ and substituting $t=1$ we get 
\begin{align}
\label{DI-B_2-01}
 \cB^{[2]}\ci{X_0} (X_0) - \cB^{[2]}\ci{X_0} (X) = \sd x \sd y .
\end{align}
Concavity of $\cB^{[1]}$ means that its contribution is non-negative, so for the case $\sd x\sd y\ge 0$ statement \cond2 of the lemma is proved with $c=1$. 

Let now $\sd x\sd y<0$. We will prove that for any triple $X_1, X_2, X_0 \in (0,\infty)\times(0,\infty)$ such that \[
X_0=\theta_1 X_1 + \theta_2 X_2 =X_0, \qquad \theta_{1,2}\ge 0, \quad \theta_1+\theta_2=1
\]
and such that $X_0\in \Omega_Q$ (i.e.~$x_0y_0\le Q$) and $(x_1-x_0)(y_1-y_0)<0$ we have
\begin{align}
\label{DI-B_2-mod-02}
\cB(X_0) - (\theta_1\cB(X_1) +\theta_2\cB(X_2)) \ge c  \theta_1\theta_2 |x_1-x_2|\cdot |y_1-y_2|\,.
\end{align}
If this inequality is proved, we then take small $h>0$ and define 
\begin{align*}
X_1:=X=X_0+\sd X, \quad X_2 = X_0-h\sd X, \qquad \theta_1= \frac{h}{1+h},  \quad \theta_2= \frac{1}{1+h}. 
\end{align*}
Substituting these values in \eqref{DI-B_2-mod-02}, dividing by $h$ and taking limit as $h\to 0+$ we  will get statement \cond2 of the lemma; here we used the fact that 
\[
\lim_{h\to 0+} \frac1h \left( \cB(X_0) - \frac{1}{1+h} \cB(X_0 - h\sd X) \right) = 
 \cB'(X_0)\sd X. 
\] 

To prove \eqref{DI-B_2-mod-02} we will use concavity of the function $t\mapsto t^{1/2}$. 
Denote 
\[
\sd_x = x_1-x_2 = \sd x/\theta_2, \qquad \sd_y =y_1-y_2 =\sd y/\theta_2. 
\]
Define $h=\theta_1/\theta_2$, so $\theta_1=h/(1+h)$, $\theta_2 = 1/(1+h)$. 
Then combining \eqref{DI-B_2-01} and Lemma \ref{l:conc-triv} we get
\begin{align}
\notag
 \theta_1 x_1 y_1 +\theta_2 x_2y_2 - x_0y_0  & = \left(\theta_2 h^2 + \theta_1\right) \sd x\sd y
\\
\label{DI-B_2-03}
& = h \sd x \sd y =\theta_1\theta_2 \sd_x\sd_y .
\end{align}
Note, that here we have the wrong sign (negative), it wll be compensated by the contribution from $\cB^{[1]}$. Let us estimate that contribution using concavity of the function $t\mapsto t^{1/2}$ and equality  \eqref{DI-B_2-03}: 
\begin{align*}
(x_0y_0)^{1/2} - \theta_1 (x_1y_1)^{1/2} - \theta_2(x_2y_2)^{1/2} &\ge
 (x_0y_0)^{1/2} - \left( \theta_1 x_1 y_1 + \theta_2 x_2 y_2 \right)^{1/2} \\
& =  (x_0y_0)^{1/2} - \left( x_0y_0 + \theta_1\theta_2 \sd_x\sd_y \right)^{1/2}\\
&= \frac{-\theta_1\theta_2 \sd_x\sd_y}{ (x_0y_0)^{1/2}  + \left( x_0y_0 + \theta_1\theta_2 \sd_x\sd_y \right)^{1/2}} \\
& \ge \frac{-\theta_1\theta_2 \sd_x\sd_y}{ 2 (x_0y_0)^{1/2} } \,.
\end{align*}
Multiplying this estimate by $4Q^{1/2}$ and using $x_0y_0\le Q$ we get 
\begin{align*}
4Q^{1/2} \left( \cB^{[1]}(X_0) - (\theta_1\cB^{[1]}(X_1) +\theta_2\cB^{[1]}(X_2) \right) \ge 2  \theta_1\theta_2 |x_1-x_2|\cdot |y_1-y_2|\,.
\end{align*}
Combining this inequality with \eqref{DI-B_2-03} we get \eqref{DI-B_2-mod-02} with $c=1$. \hfill\qed



\section{A counterexample}
\label{counterE}

Here we present a simple  example of a (bounded in a non-weighted $L^2$) martingale transform $T$ and an dyadic $A_2$-weight $w$, such that $T$ is not bounded in $L^2(w)$. 

Take a small $\e>0$. Consider an interval $I$, $|I|=2$, and split it into 4 subintervals (children) $I_k$, $|I_1| = |I_3|=1-\e$, $|I_2|=|I_4|=\e$. Denote $J_1=I_1\cup I_2$, $J_2=I_3\cup I_4$, and define 
\[
h_1:= 2^{-1/2}(\1\ci{J_1} -\1\ci{J_2}), \qquad h_2 := \e^{-1/2}\1\ci{I_2} - \e^{1/2}(1-\e)^{-1} \1\ci{I_1}.  
\]
The functions $h_{1,2}$ are Haar functions, i.e.~they are constant on children of $I$ and orthogonal to constants. Note also that 
\[
\|h_1\|\ci{L^2} =1, \qquad \|h_2\|\ci{L^2} \le 2^{1/2}
\]
(if $\e<\1/2$). Then the operator $T$
\[
T f = (f, h_1)\ci{L^2} h_2 
\]
is a bounded operator in $L^2$, $\|T\|\le 2^{1/2}$. 

Define a weight $w$, 
\[
w(x) := \left\{
\begin{array}{ll} 1, \qquad & x \in  I_1\cup I_3 \,, \\
\e^{-1} &  x\in I_2\cup I_4 \,. \end{array} 
\right. 
\]
Then $w$ satisfies the $A_2$ condition and $[w]\ci{A_2}\le 2$. Here in the definition of $A_2$ condition  we checked the averages over $I$ and over its children $I_k$. Note, that if we also check the $A_2$ condition on intervals $J_{1,2}$, we still have the same estimate $[w]\ci{A_2}\le 2$. But even  if we consider averages over all possible unions of intervals $I_k$, we still have the estimate $[w]\ci{A_2}\le 3$. 

Since $Th_1=h_2$ and 
\[
\| h_1\|\ci{L^2(w)}\le 2, \qquad \|h_2\|\ci{L^2(w)} \ge \e^{-1/2}
\]
we get that 
\[
\|T\|\ci{L^2(w)\to L^2(w)} \ge \e^{-1/2}/2\,.
\]

Considering a sequence of $\e_n\searrow 0$ and taking a direct sum of the above examples, we get a bounded martingale transform $T$ and an $A_2$ weight $w$ such that $T$ is not bounded in $L^2(w)$

\begin{rem*}
A reader familiar with the subject can notice that the operator $T$ in the above counterexample is essentially the Haar shift considered by S.~Petermichl \cite{P}. ``Essentially" means here that we can represent it as a Haar shift on a standard dyadic lattice as in \cite{P}, but we have to change the reference measure from Lebesgue measure to a certain very non-doubling measure $\nu$.
\end{rem*}

\section{\texorpdfstring{$T(\1)$}{T(1)} theorem for Haar multipliers} 
\label{T1}

In this section we will prove that it is sufficient to check the weighted estimates for Haar multipliers on characteristic functions on atoms. 

While we will need the result only for absolutely continuous (with respect to the ambient measure $\nu$) measures, we state it here for arbitrary  measures $\mu_1$, $\mu_2$. 

 In what follows we will only consider finite sequences $\sigma=(\sigma\ci I)\ci{I\in\cD}$, $|\sigma\ci I|\le 1$ (only finitely many terms are non-zero), thus avoiding unnecessary technical details.

In this section measuer $\mu$ is also an arbitrary measure.
Note that  for a measure $\mu$ and $f\in L^1(I, \mu)$ we can define 
\[
\E\ci I(f\mu):= \left(|I|^{-1} \int_I f\,d\mu \right) \1\ci I =\La f\mu\Ra\ci I \1\ci I, 
\]
and therefore $\Delta\ci I (f\mu)$. Then for  the martingale multiplier $T=T_\sigma$ we can define $T(f\mu)$. 

Recall that for a martingale multiplier $T=T_\sigma$ and $I_0\in\cD$ we defined the operator $T\ci I$
\[
T\ci{I_0} = \sum_{I\in\cD(I_0)} \sigma\ci{I} \Delta\ci{I}\,.
\]
Note also  that for $f\in L^1(I, \mu)$, $I\in\cD$ the function $T\ci I (f\mu)$ is well defined.


\begin{thm}
\label{t:T1}
Let $T=T_\sigma$, $\sigma=(\sigma\ci I)\ci{I\in\cD}$, $|\sigma\ci I|\le 1$ be a Haar multiplier, and let $\mu_1$, $\mu_2$ be measures on $\cX$ such that 
\[
\sup_{I\in\cD} |I|^{-2}\mu_1(I)\mu_2(I) =: [\mu_1, \mu_2]\ci{A_2} <\infty. 
\] 
Assume that for all $I\in\cD$
\begin{align}
\label{test-07}
\|T\ci I (\1\ci I \mu_1)\|\ci{L^2(\mu_2)} &\le A \|\1\ci I\|\ci{L^2(\mu_1)} = A\mu_1(I)^{1/2}, 
\\
\notag
\|T\ci I (\1\ci I \mu_2)\|\ci{L^2(\mu_1)} &\le A \|\1\ci I\|\ci{L^2(\mu_2)} = A\mu_2(I)^{1/2}
\end{align}
Then 
\[
\|Tf\mu_1\|\ci{L^2(\mu_2)} \le \left(2 [\mu_1, \mu_2]^{1/2}\ci{A_2} +5A\right) \|f\|\ci{L^2(\mu_1)} \,.
\]
\end{thm}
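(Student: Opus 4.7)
My plan is to estimate the bilinear form $B(f,g) := \int T(f\mu_1)\,g\,d\mu_2$ for arbitrary $f\in L^2(\mu_1)$ and $g\in L^2(\mu_2)$. A direct computation on the children of each $I\in\cD$ gives the symmetric identity
\[
\int \Delta_I(f\mu_1)\,g\,d\mu_2 \;=\; \int f\,\Delta_I(g\mu_2)\,d\mu_1,
\]
so $B(f,g) = \int f\,T(g\mu_2)\,d\mu_1$; the two testing hypotheses thus enter symmetrically into the argument.

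Expand $f$ and $g$ in their respective $\mu_i$-martingale differences, writing $\phi_I := \Delta_I^{\mu_1}f$ and $\psi_J := \Delta_J^{\mu_2}g$. The zero $\mu_1$-mean of $\phi_I$ makes $\Delta_K(\phi_I\mu_1) = 0$ for $K \supsetneq I$, so $T(\phi_I\mu_1) = T\ci I(\phi_I\mu_1)$ is supported in $I$. Consequently the matrix entry
\[
t_{IJ} \;:=\; \int T(\phi_I\mu_1)\,\psi_J\,d\mu_2
\]
vanishes unless $I$ and $J$ are nested, and $B = B_{=} + B_{J\subsetneq I} + B_{I\subsetneq J}$, the last piece handled by the dual identity (which swaps the roles of the two testing conditions).

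For the descendant piece, $\phi_I$ is a single constant $c_{I,J}$ on the child $I_J\in\ch(I)$ of $I$ containing $J$, so only Haar scales $K\subseteq J$ survive in the $\mu_2$-pairing with $\psi_J$, giving
\[
t_{IJ} \;=\; c_{I,J}\,\int_J T\ci J(\1_J\mu_1)\,\psi_J\,d\mu_2 .
\]
Summing the differences $c_{I,J}=\langle f\rangle_{I_J,\mu_1}-\langle f\rangle_{I,\mu_1}$ over ancestors $I\supsetneq J$ telescopes to $\langle f\rangle_{J,\mu_1}$, up to a $\cD_{-\infty}^{\text{fin}}$ tail that is absorbed using the $A_2$ estimate on averages from Proposition~\ref{p:nec-01}. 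The testing hypothesis $\|T\ci J(\1_J\mu_1)\|_{L^2(\mu_2)}\le A\mu_1(J)^{1/2}$ together with Cauchy--Schwarz in $\mu_2$ reduces the matter to controlling a weighted paraproduct in $\langle f\rangle_{J,\mu_1}$, which is handled by a dyadic Carleson embedding whose Carleson constant is extracted from the hypothesis $\mu_1(J)\mu_2(J)\le[\mu_1,\mu_2]_{A_2}|J|^2$ (in the spirit of the bilinear maximal estimate of Lemma~\ref{l:bilin}); this is the source of the $[\mu_1,\mu_2]_{A_2}^{1/2}$ contribution to the final constant.

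The diagonal piece $B_=$ is simpler: decomposing $\phi_I\mu_1 = \sum_{I'\in\ch(I)}(\langle f\rangle_{I',\mu_1}-\langle f\rangle_{I,\mu_1})\,\1_{I'}\mu_1$ and applying the testing hypothesis to each $\1_{I'}\mu_1$, combined with the $L^2(\mu_1)$-orthogonality of the $\phi_I$'s and the $L^2(\mu_2)$-orthogonality of the $\psi_J$'s, produces a contribution of size $\lesssim A\,\|f\|_{L^2(\mu_1)}\|g\|_{L^2(\mu_2)}$. The main obstacle I anticipate is the paraproduct/Carleson step in the non-homogeneous setting: the sequence $(\mu_1(J))_{J\in\cD}$ is not itself Carleson with respect to $\mu_1$, so the $A_2$ hypothesis must be invoked carefully (perhaps via an intermediate sequence or a stopping-time decomposition) to furnish precisely the $[\mu_1,\mu_2]_{A_2}^{1/2}$ dependence, and the constants in the diagonal and off-diagonal contributions must be tracked additively to yield the stated bound $2[\mu_1,\mu_2]_{A_2}^{1/2}+5A$ rather than a multiplicative one.
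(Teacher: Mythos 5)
Your decomposition of the bilinear form into a diagonal part and two nested (paraproduct-type) parts is structurally the same as the paper's splitting $T^{\mu_1}=\Pi_1+\Pi_2'+T^{\mu_1}\ti{diag}$, but you attribute the two key estimates to the wrong pieces, and the one mechanism that makes the proof work is missing. For the descendant sum, after telescoping and Cauchy--Schwarz you need a Carleson embedding with respect to $\mu_1$, whose Carleson condition is
\[
\sum_{J\in\cD(J_0)}\bigl\|\Delta\ci J^{\mu_2}\,T^{\mu_1}\1\ci J\bigr\|_{L^2(\mu_2)}^2\le A^2\,\mu_1(J_0),
\]
and this does \emph{not} come from the $A_2$ hypothesis at all: as you yourself note, $(\mu_1(J))_J$ is not $\mu_1$-Carleson, and Lemma \ref{l:bilin} concerns the dual pair $w,w^{-1}$ over one base measure, so it does not apply to two arbitrary measures $\mu_1,\mu_2$. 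The Carleson condition comes from the testing hypothesis alone, via the identity $\Delta\ci J^{\mu_2}T^{\mu_1}\1\ci J=\Delta\ci J^{\mu_2}T^{\mu_1}\ci{J_0}\1\ci{J_0}$ for $J\subseteq J_0$ (this is the whole point of the $T(\1)$ reduction) combined with Bessel's inequality in $L^2(\mu_2)$, which gives the bound $A^2\mu_1(J_0)$ and hence a paraproduct norm $\le 2A$. Your stated plan---to extract the Carleson constant from $\mu_1(J)\mu_2(J)\le[\mu_1,\mu_2]\ci{A_2}|J|^2$, possibly via a stopping-time argument---is precisely the step you flag as an obstacle, and it would fail: no such Carleson bound follows from $A_2$ alone.

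Conversely, the $A_2$ characteristic does enter, but in the diagonal part, which you underestimate. Writing $h=\Delta\ci I^{\mu_1}f=\sum_{I'\in\ch(I)}\alpha\ci{I'}\1\ci{I'}$, one has $T\ci I(h\mu_1)=\sigma\ci I\,\Delta\ci I(h\mu_1)+\sum_{I'\in\ch(I)}\alpha\ci{I'}T\ci{I'}(\1\ci{I'}\mu_1)$. Testing on the children controls only the second sum (giving $A\|h\|\ci{L^2(\mu_1)}$ by disjointness); the top-scale term $\sigma\ci I\Delta\ci I(h\mu_1)$ is not covered by any testing condition and is bounded by $2[\mu_1,\mu_2]\ci{A_2}^{1/2}\|h\|\ci{L^2(\mu_1)}$, because the averaging operator $h\mapsto\E\ci I(h\mu_1)$ has norm exactly $|I|^{-1}\mu_1(I)^{1/2}\mu_2(I)^{1/2}$ from $L^2(\mu_1)$ to $L^2(\mu_2)$. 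So your claim that the diagonal contributes only $O(A)$ is incorrect; the $2[\mu_1,\mu_2]\ci{A_2}^{1/2}$ in the final constant comes from this diagonal term, not from the paraproducts. With these two corrections (Carleson from testing via the projection identity; $A_2$ only in the top-scale diagonal piece), the argument closes along the lines you set up and yields the stated additive constant.
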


\begin{proof}
For a measure $\mu$ finite on each $I\in\cD$ define the weighted averaging operators $\E\ci I^\mu$, $I\in\cD$
\begin{align*}
\E\ci I^\mu f &:= \left(\mu(I)^{-1}\int_I f\, d\mu \right) \1\ci I =: \La f \Ra\ci{I,\mu} \1\ci I \,, 
\intertext{and the weighted martingale differences $\Delta\ci I^\mu$,} 
\Delta\ci I^\mu & := -  \E\ci I^\mu + \sum_{I'\in\ch(I)} \E\ci{I'}^\mu \,. 
\end{align*}
The subspaces $\Delta\ci I^\mu L^2(\mu)$ are orthogonal in $L^2(\mu)$, the operators $\Delta\ci I^\mu$ are orthogonal projections onto these subspaces, so for all $f\in L^2(\mu)$ 
\[
\sum_{I\in\cD} \|\Delta\ci I^\mu f\|\ci{L^2(\mu)}^2 \le \|f\|\ci{L^2(\mu)}^2 .
\]

Define an operator $T^{\mu_1} : L^2(\mu_1) \to L^2(\mu_2)$ as 
\[
T^{\mu_1} f = T(f\mu_1), \qquad f\in L^2(\mu_1)\,.
\]
Its dual with respect to the linear dualities $\La \fdot, \fdot \Ra_{\mu_{1,2}}$, 
\[
\La f, g \Ra_\mu = \int fg\,d\mu\,,
\]
is the operator $T^{\mu_2} :L^2(\mu_2)\to L^2(\mu_1)$, 
\[
T^{\mu_2} f = T(f\mu_2) , \qquad f \in L^2(\mu_2). 
\]

Define the \emph{paraproducts} $\Pi_1=\Pi\ci{T^{\mu_1}}: L^2(\mu_1) \to L^2(\mu_2)$
\begin{align*}
\Pi_1 f & = \sum_{I\in \cD} (\E\ci I^{\mu_1} f) \Delta\ci I^{\mu_2} (T^{\mu_1}\1\ci I)\,,
\intertext{and $\Pi_2=\Pi\ci{T^{\mu_2}}: L^2(\mu_2) \to L^2(\mu_1)$}
\Pi_2 f & = \sum_{I\in \cD} (\E\ci I^{\mu_2} f) \Delta\ci I^{\mu_1} (T^{\mu_2}\1\ci I)\,.
\end{align*}
Note that if $I\subset I_0$, $I_0\in\cD$, then 
\begin{align}
\label{DtT1}
\Delta\ci I^{\mu_2} T^{\mu_1} \1\ci I = \Delta\ci I^{\mu_2} T^{\mu_{1}} \1\ci{I_0}  
= \Delta\ci I^{\mu_2} T^{\mu_{1}}\ci{I_0} \1\ci{I_0} , 
\end{align}
so we (at least formally) can write $T^{\mu_{1,2}}\1$ instead of $ T^{\mu_{1,2}}\1\ci I$ in the definition of paraproducts. 

If $I'\in\cD(I)$, $I'\ne I$ then 
\begin{align*}
\Delta\ci{I'}^{\mu_2} T^{\mu_1} \Delta\ci I^{\mu_1} = \Delta\ci{I'}^{\mu_2} T\ci I^{\mu_1} \Delta\ci I^{\mu_1} = \Delta\ci{I'}^{\mu_2} \Pi_1 \Delta\ci I^{\mu_1}\,,
\end{align*} 
and if $I'\in \cD$ does not intersect $I$ then 
\begin{align*}
\Delta\ci{I'}^{\mu_2} T^{\mu_1} \Delta\ci I^{\mu_1} 
= \Delta\ci{I'}^{\mu_2} \Pi_1 \Delta\ci I^{\mu_1} =0 \,.
\end{align*} 
Finally, if $I'\in\cD(I)$, then $\Delta\ci{I}^{\mu_2} \Pi_1 \Delta\ci{I'}^{\mu_1}=0$. 

Similar formulas hold for $T^{\mu_2}$ and $\Pi_2$, so we can represent 
\[
T^{\mu_1} = \Pi_1 + \Pi_2' + T^{\mu_1}\ti{diag}\,,
\]
where $\Pi_2'$ is the dual (with respect to the linear duality) of the paraproduct $\Pi_1$, and $T^{\mu_1}\ti{diag} $ is a ``diagonal'' operator, meaning that
\[
T^{\mu_1}\ti{diag} f = \sum_{I\in\cD} \Delta\ci{I}^{\mu_2} T^{\mu_1} \Delta\ci I^{\mu_1} f.  
\]

For the paraproduct $\Pi_1$ we have
\[
\|\Pi_1 f\|^2\ci{L^2(\mu_2)} = \sum_{I\in\cD}|\La f\Ra\ci{I,\mu_1} |^2 
\| \Delta\ci I^{\mu_2} T^{\mu_1} \1\ci I\|\ci{L^2(\mu_2)}^2, 
\]
so we can estimate its norm using  the Carleson Martingale Embedding Theorem. 
We get using \eqref{DtT1} that  for any $I_0\in\cD$
\begin{align*}
\sum_{I\in\cD(I_0)} 
\|\Delta\ci I^{\mu_2} T^{\mu_1} \1\ci I\|\ci{L^2(\mu_2)}^2 
& = \sum_{I\in\cD(I_0)} 
\|\Delta\ci I^{\mu_2} T\ci{I_0}^{\mu_1} \1\ci{I_0}\|\ci{L^2(\mu_2)}^2
\\
& \le  \| T\ci{I_0}^{\mu_1} \1\ci{I_0}\|\ci{L^2(\mu_2)}^2 \le A^2 \mu(I_0)
\end{align*}
and by the Carleson Martingale Embedding Theorem $\|\Pi_1\|\le 2 A$. Similarly, $\|\Pi_2\|\le 2A$, so it remains to estimate $T^{\mu_1}\ti{diag}$. 

Since $\Delta\ci I^{\mu_2}\Pi_1 \Delta\ci I^{\mu_1} =0$ and similarly for $\Pi_2$, we get that
\[
\Delta\ci I^{\mu_2}T^{\mu_1}\ti{diag} \Delta\ci I^{\mu_1} = 
\Delta\ci I^{\mu_2}T^{\mu_1}  \Delta\ci I^{\mu_1}\,.
\]
Moreover, since for $J\notin\cD(I)$
\begin{align*}
\Delta\ci J [\Delta\ci{I}^{\mu_1} (f) \mu_1]= 0, 
\intertext{we conclude that}
\Delta\ci I^{\mu_2}T^{\mu_1}\ti{diag} \Delta\ci I^{\mu_1} = 
\Delta\ci I^{\mu_2}T^{\mu_1}\ci I  \Delta\ci I^{\mu_1}\,.
\end{align*}

Denote 
\[
\Delta\ci I^{\mu_1} f =: h = \sum_{I'\in \ch(I)} \alpha\ci{I'}\1\ci{I'}\,. 
\]
Since $T^{\mu_1}\ci I f= \sigma\ci I \Delta\ci I(f\mu_1) +  \sum_{I'\in\ch(I)} T^{\mu_1}\ci{I'}f$, 
\[
T^{\mu_1}\ci I h  = \sigma\ci I \Delta\ci I (h\mu_1) + 
\sum_{I'\in\ch(I)} \alpha\ci{I'} T^{\mu_1}\ci{I'}\1\ci{I'} =: g_1 + g_2.
\]
Using \eqref{test-03} we get that
\begin{align*}
\| g_2\|\ci{L^2(\mu_2)}^2 & = \sum_{I'\in\ch(I)} |\alpha\ci{I'}|^2 
\| T^{\mu_1}\ci{I'}\1\ci{I'} \|\ci{L^2(\mu_2)}^2 
\\
& \le
A^2\sum_{I'\in\ch(I)} |\alpha\ci{I'}|^2 
\| \1\ci{I'} \|\ci{L^2(\mu_1)}^2 = A^2 \|h\|\ci{L^2(\mu_1)}^2.
\end{align*}
Recalling the definition \eqref{Delta_I} of $\Delta\ci I$ and the fact that the norm of the averaging operator $f \mapsto \E\ci I (f\mu_1)$ as an operator $L^2(\mu_1)\to L^2(\mu_2)$ is exactly $|I|^{-1} \mu_1(I)^{1/2}\mu_2(I)^{1/2}$, we get that 
\[
\|g_1\|\ci{L^2(\mu_2)} \le 2 [\mu_1, \mu_2]\ci{A_2}^{1/2} \|h\|\ci{L^2(\mu_1)}\,.
\]
(Finding the norm of the averaging operator is an easy computation that we leave as an exercise: anybody should be able to compute a norm of a rank one operator.)

Since an operator $\Delta^{\mu_2}\ci I$ is an orthogonal projection in $L^2(\mu_2)$ we conclude that the norm of each diagonal block $\Delta\ci I^{\mu_2}T^{\mu_1}\ti{diag} \Delta\ci I^{\mu_1}$, and so the norm of the operator $T^{\mu_1}\ti{diag}$ can be estimated as 
\[
\| T^{\mu_1}\ti{diag} \|\ci{L^2(\mu_1)\to L^2(\mu_2)} \le A + 2 [\mu_1, \mu_2]\ci{A_2}^{1/2}\,.
\]
Combining that with estimates of paraproducts we get the conclusion of the theorem. 
\end{proof}

\section{Reduction to Carleson measures without \texorpdfstring{$T(1)$}{T(1)} theorem}
\label{withoutT1}

In some cases the mechanism of two weight $T(1)$ theorem can be rather involved, we were lucky that for well localized operators the mechanism is more simple.
So sometimes it is convenient to see  the direct proof, without using the mechanism of  a two-weight $T(1)$ theorem.  Also getting a proof that circumvents this mechanism can be instructive. This second proof will require a bit more of outer measure spaces machinery (but not much more). 

 The main estimate  we want to prove is \eqref{main2w-01}.
By duality,  it follows from the
inequality ($u:=w^{-1}$):
\begin{equation}
\label{bilin1}
\sum_{I\in \cD} \bigg|\int_I \Delta\ci I(fu) \cdot \Delta\ci I(gw) \, d\nu\bigg| \le A
[w]\ci{A_2} \|f\|\ci{L^2(u)} \|g\|\ci{L^2(w)}\ .
\end{equation}

Recalling the definition of $\E\ci I^\mu$ and $\Delta\ci I^\mu$ from Section \ref{T1} and 
using the notation $\E\ci I^u$ and $\Delta\ci I^u$ for $d\mu= ud\nu$, 
let us split this sum to $4$ sums. 
For that we will write first
\begin{align*}
\Delta\ci I (fu) &= \Delta\ci I[(f -\E\ci I^u f -\Delta\ci I^u f)u] + \Delta\ci I[ \E\ci I^u f\cdot u] +\Delta\ci I[\Delta\ci I^u f\cdot u]\ .
\\
\Delta\ci I (gw) &= \Delta\ci I[(g -\E\ci I^w g -\Delta\ci I^w g)w] + \Delta\ci I[ \E\ci I^w g\cdot w] +\Delta\ci I[\Delta\ci I^w g\cdot w]\ .
\end{align*}

Now let us notice that the first terms in these formulae vanish. In fact, $1_I(f -\E\ci I^u f -\Delta\ci I^u f)$ is the sum of ``small" intervals Haar functions, meaning that this term (on $I$) is the sum of $\Delta\ci J^u f$, where $J\subsetneq I$. But for every such $J$, $\Delta\ci J^u f$ is ortogonal to constants in $L^2(u)$. Operation $\Delta\ci I (u\Delta\ci J^u f)$ thus returns $0$ because such $J$ is inevitably equal to or is a descendant of $I', I'\in \ch(I)$.

As $\E\ci I^u f$ is constant on $I$, we have $\Delta\ci I[ \E\ci I^u f\cdot u] = (\E\ci I^u f   ) \Delta\ci I u$ and similarly for $w$, so one can now estimate \eqref{bilin1} by the  $4$ sums:
\begin{align*}
\Sigma_4 &= \sum_{I\in \cD} \bigg|\int_I (\E\ci I^u f) (\E\ci I^w g) \cdot(\Delta\ci I u)\,\,( \Delta\ci I w)\, d\nu\bigg|\ , 
\\
\Sigma_3 & = \sum_{I\in \cD} \bigg|\int_I  \Delta\ci I[(\Delta\ci I^u f) u]\cdot(\E\ci I^w g)\,\Delta\ci I w\, d\nu\bigg|\ ,
\\
\Sigma_2 & = \sum_{I\in \cD} \bigg|\int_I  \Delta\ci I[(\Delta\ci I^w g) w]\cdot(\E\ci I^u f)\,\Delta\ci I u\, d\nu\bigg|\ ,
\\
\Sigma_1 &= \sum_{I\in \cD} \bigg|\int_I  \Delta\ci I[(\Delta\ci I^u f) u]\cdot \Delta\ci I[(\Delta\ci I^w g) w]\, d\nu\bigg|\ .
\end{align*}

Let us look at $\Sigma_4$ first: since
\[
\Bigl|\int_I \Delta\ci I u\,\Delta\ci I w\,d\nu\Bigr| \le \rho_I |I|,
\] 
where as in Section \ref{s:FirstReduct}
\[
\rho\ci I:= \sum_{I'\in\ch(I)}  | \La u\Ra\ci{I'} - \La u\Ra\ci{I} |\cdot 
| \La w\Ra\ci{I'} - \La w\Ra\ci{I} |\cdot|I'|/|I|, 
\]
we can estimate
\[
\Sigma_4 \le \sum_{I\in\cD}\frac{\left| \La fu \Ra\ci I \right|}{\La u \Ra\ci I} \cdot
\frac{\left| \La gw \Ra\ci I \right|}{\La w \Ra\ci I} \cdot\rho\ci I |I|;
\]
here as usual we use the notation $|I|=\nu(I)$. 

And the last sum was, in essence, already estimated in Section \ref{averagin&OMS}. Namely, it has been proved there that the collection $\{\rho_I \}_{I\in \cD}$ has the Carleson packing property  \eqref{CMP-02}. In particular, this means that
\begin{equation}
\label{rhorho}
\{\rho_I\}\ci{I\in \cD} \in L^\infty(\cD, S^1), \qquad
\big\| \{\rho_I\}\ci{I\in \cD} \big\|\ci{L^\infty(\cD, S^1)} \le C [w]\ci{A_2}  \ .
\end{equation}
But by Lemma \ref{l:bilin}
\begin{align}
\notag
\left\{ \Phi\ci I\right\}\ci{I\in\cD}  := 
\left\{ \frac{\left| \La fu \Ra\ci I \right|}{\La u \Ra\ci I} \cdot
\frac{\left| \La gw \Ra\ci I \right|}{\La w \Ra\ci I} \right\}_{I\in \cD} &\in L^1(\cD, S^\infty)\,, 
\\ 
\label{fg}
\left\|  \left\{ \Phi\ci I\right\}\ci{I\in\cD} \right\|\ci{L^1(\cD, S^\infty)} & \le 4 \| f\|\ci{L^2(u)} \|g\|\ci{L^2(w)}.
\end{align}
Now we use $L^1$--$L^\infty$ duality (Lemma \ref{l:L1-L^infty}) with $\mu=\nu$ to conclude 
that
\[
\Sigma_4
\le C[w]\ci{A_2}\|f\|\ci{L^2(u)} \|g\|\ci{L^2(w)}\ .
\]

Sums $\Sigma_2$ and $\Sigma_3$ are symmetric, so let us consider $\Sigma_2$.
\begin{align*}
\Sigma_2 &=\bigg|\int_I \Delta\ci I[(\Delta\ci I^w g) w]\cdot\E\ci I^u(f)\,\Delta\ci I u\, d\nu\bigg|=
\bigg|\int_I (\Delta\ci I^w g) w\cdot \E\ci I^u(f)\Delta\ci I u)\, d\nu\bigg|
\\
 &\le \bigg(\sum_I \int_I|\Delta\ci I^w g|^2 w\, d\nu\bigg)^{1/2} \bigg(\sum_I\int_I|\E\ci I^u f|^2|\Delta\ci Iu|^2 w\, d\nu\bigg)^{1/2}
\\
&\le\|g\|\ci{L^2(w)}\bigg(\sum_I\bigg|\frac{\La fu\Ra\ci I}{\La u\Ra\ci I}\bigg|^2\int_I|\Delta\ci Iu|^2 w\, d\nu\bigg)^{1/2}
\\
& =
\|g\|\ci{L^2(w)}\bigg(\sum_I\big|\La f\Ra\ci{I, u}\big|^2\int_I|\Delta\ci I u|^2 w\, d\nu\bigg)^{1/2}
=
\|g\|\ci{L^2(w)}\bigg(\sum_I\big|\La f\Ra\ci{I, u}\big|^2\gamma\ci I |I|\bigg)^{1/2}\ , \end{align*}
where
\[
\gamma\ci I:=|I|^{-1}\int_I|\Delta\ci I u|^2 w\, d\nu = \sum_{I'\in\ch(I)} | \La u\Ra\ci{I'} - \La u\Ra\ci{I} |^2 \ci{I}\La w\Ra\ci{I'} |I'|/|I| \,.
\]
So, to get the correct estimate of $\Sigma_2$ it is sufficient to show that 
\begin{align}
\label{embed-03}
\sum_I\big|\La f\Ra\ci{I, u}\big|^2\gamma\ci I |I| \le C_1 [w]\ci{A_2}^2 \|f\|\ci{L^2(u)}^2 \qquad \forall f \in L^2(u).
\end{align}
By the martingale Carleson Embedding Theorem it is sufficient to test the embedding on characteristic functions $\1\ci{I_0}$, $I_0\in\cD$, so \eqref{embed-03} follows from the estimate 
\begin{align}
\label{test-08}
\sum_{I\in\cD(I_0)} \gamma\ci I |I| \le C [w]\ci{A_2}^2 \|\1\ci{I_0}\|\ci{L^2(u)}^2 =
C [w]\ci{A_2}^2 \La u \Ra\ci{I_0} |I_0| \qquad \forall I_0\in\cD, 
\end{align}
and this estimate implies \eqref{embed-03} with  $C_1=4C$. 

But the above estimate \eqref{test-08} is exactly the estimate \eqref{test-05} which was proved before in Section \ref{s:ProofMainRes}.

The martingale Carleson Embedding Theorem, i.e.~the fact that \eqref{test-08} implies \eqref{embed-03} can be also shown using the machinery of the outer measure spaces.


To see that we prove the following lemma,  which essentially encodes the $L^2$ 
boundedness of the martingale 
maximal operator. 

Recall that for a finite on all $I\in\cD$ measure $\mu$ on $\cX$, the averaging operator $\A_\mu$, mapping functions on $\cX$ to functions on $\cD$ is defined as 
\[
\A_\mu f (I) := \La f\Ra\ci{I, \mu} := \mu(I)^{-1} \int_I f d\mu , \qquad I\in \cD; 
\]
if $\mu(I)=0$ we put $\A_\mu f(I) =0$. 
\begin{lm}
\label{l2}
The averaging operator $\A_\mu$ is a bounded operator from $L^2(\mu)$ to 
the outer space $L^2(\cD, \mu^*, S^\infty)$ and, moreover
\[
\| \A_\mu f\|\ci{ L^2(\cD, \mu^*, S^\infty)} \le 2 \|f\|\ci{L^2(\mu)} \qquad \forall f\in L^2(\mu).
\]
\end{lm}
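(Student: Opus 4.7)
\textbf{Proof plan for Lemma \ref{l2}.} The plan is to reduce the outer $L^2$ norm on the dyadic side to the classical $L^2$ norm of the dyadic maximal function $M_\mu\ut{d}$ of Section \ref{averagin&OMS}, which is known to satisfy $\|M_\mu\ut{d}f\|\ci{L^2(\mu)} \le 2\|f\|\ci{L^2(\mu)}$. Writing out the outer $L^2$ norm via the layer-cake formula,
\[
\|\A_\mu f\|\ci{L^2(\cD,\mu^*,S^\infty)}^{2} = 2\int_0^\infty \lambda\,\mu^*\bigl(S^\infty \A_\mu f > \lambda\bigr)\,d\lambda,
\]
the whole task is to show that for every $\lambda > 0$
\[
\mu^*\bigl(S^\infty \A_\mu f > \lambda\bigr) \le \mu\bigl(\{M_\mu\ut{d} f > \lambda\}\bigr),
\]
after which the standard $L^2$ identity for $\|M_\mu\ut{d} f\|\ci{L^2(\mu)}$ finishes the proof with constant $2$.

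To prove the pointwise/level-set inequality, I would fix $\lambda>0$ and first reduce to the case of $f$ with finite support (say supported on finitely many atoms of $\cD$), so that the collection $\cJ_\lambda$ of \emph{maximal} $I\in\cD$ with $|\La f\Ra\ci{I,\mu}| > \lambda$ is well-defined. I then set $\cG_\lambda := \bigcup_{I\in\cJ_\lambda}\cD(I)$ and claim that $\cG_\lambda$ is admissible for the infimum defining $\mu^*(S^\infty \A_\mu f > \lambda)$, i.e.\ that $S^\infty(\A_\mu f\,\1\ci{\cD\setminus\cG_\lambda})(\cD(J)) \le \lambda$ for every $J\in\cD$. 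This follows directly from maximality: any $J'\in\cD(J)\setminus\cG_\lambda$ either strictly contains some element of $\cJ_\lambda$ or is disjoint from all of them, and in either case $|\La f\Ra\ci{J',\mu}|\le\lambda$ by the maximal choice of $\cJ_\lambda$.

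Consequently
\[
\mu^*(S^\infty \A_\mu f > \lambda) \le \mu^*(\cG_\lambda) \le \sum_{I\in\cJ_\lambda}\mu^*(\cD(I)) = \sum_{I\in\cJ_\lambda}\mu(I),
\]
and because the intervals in $\cJ_\lambda$ are pairwise disjoint (by maximality) and every $x\in\bigcup_{I\in\cJ_\lambda}I$ satisfies $M_\mu\ut{d} f(x) \ge |\La f\Ra\ci{I,\mu}| > \lambda$, the right-hand side equals $\mu\bigl(\bigcup_{I\in\cJ_\lambda}I\bigr) \le \mu(\{M_\mu\ut{d} f > \lambda\})$. Feeding this back into the layer-cake expression yields
\[
\|\A_\mu f\|\ci{L^2(\cD,\mu^*,S^\infty)}^{2} \le 2\int_0^\infty \lambda\,\mu(\{M_\mu\ut{d} f > \lambda\})\,d\lambda = \|M_\mu\ut{d} f\|\ci{L^2(\mu)}^{2} \le 4\|f\|\ci{L^2(\mu)}^{2},
\]
and taking square roots gives the lemma.

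The only subtle point is the verification that $\cG_\lambda=\bigcup_{I\in\cJ_\lambda}\cD(I)$ is admissible in the outer-measure definition of the super-level set, since the outer measure is defined via coverings that \emph{kill} the size function off the covering. Once the reduction to finite support of $f$ legitimizes maximal atoms, the rest is a direct application of the $L^2$-boundedness of the dyadic maximal function --- a classical fact that holds uniformly in $\mu$ with constant $2$ (via the weak $(1,1)$ inequality and Marcinkiewicz interpolation, or via a direct good-$\lambda$ or stopping-time argument in the martingale setting).
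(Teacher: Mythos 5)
Your proposal is correct and follows essentially the same route as the paper's proof: stopping at the maximal atoms where the $\mu$-average exceeds $\lambda$, checking that the union of the corresponding trees $\cD(I)$ is admissible in the definition of $\mu^*(S^\infty \A_\mu f>\lambda)$, comparing with the level set of $M_\mu\ut{d}f$, and finishing with the layer-cake formula and the $L^2(\mu)$ bound $\|M_\mu\ut{d}f\|\ci{L^2(\mu)}\le 2\|f\|\ci{L^2(\mu)}$. No gaps to report.
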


Applying this lemma with $d\mu= u d\nu$ we get that $\left\{ \La f\Ra\ci{I,\mu}\right\}\ci{I\in\cD} \in L^2(\cD, \mu^*, \, S^\infty_\mu)$, or, equivalently $\big\{ \La f\Ra\ci{I,\mu}^2\big\}\ci{I\in\cD} \in L^1(\cD, \mu^*, \, S^\infty)$ and 
\[
\Big\| \big\{ \La f\Ra\ci{I,\mu}^2\big\}\ci{I\in\cD}  \Big\|\ci{ L^1(\cD, \mu^*, \, S^\infty) } \le 4 \|f\|\ci{L^2(\mu)}^2. 
\]
On the other hand, \eqref{test-08} means that $\{\gamma\ci I/\La u\Ra\ci I \}\ci{I\in\cD} \in L^\infty(\cD, \mu^*, \, S^1_u)$, $d\mu=u d\nu$, 
\[
\big\| \{\gamma\ci I /\La u\Ra\ci I \}\ci{I\in\cD} \big\|\ci{ L^\infty(\cD, \mu^*, \,S^1_u) } 
\le C [w]\ci{A_2}^2 \,.
\]
Applying Lemma \ref{l:L1-L^infty}   ($L^1$--$L^\infty$ duality) we get \eqref{embed-03} with $C_1=4C$. 

\begin{proof}[Proof of Lemma \ref{l2}]
It is sufficient to prove lemma for functions $f$ supported on a union of finitely many intervals $I\in\cD$. It is also sufficient to consider only $f\ge 0$. 

Fix $\lambda>0$ and consider the maximal intervals $I\in \cD$ such that
\[
F(I):= \A_\mu f(I)>\lambda;
\]
since $f$ is supported on a union of finitely many intervals, such maximal $I$ always exist.

Call the family of such intervals $\cH_\lambda$. 
Since $F(I) = \La f \Ra\ci{I,\mu}$ 
we observe that 
for any $I\in \cH_\lambda$ and $x\in I$
\[
\lambda< F(I)\le (M_\mu\ut d f)(x)\ , 
\]
where $M_\mu\ut d$ it the maximal operator defined by \eqref{MaxOp}. 

Hence
\[
\mu^* \Big\{\bigcup_{I\in \cH_\lambda} \cD(I)\Big\} \le \mu \{x: M_\mu\ut d f(x)>\lambda\}.
\]
If we consider the new function $K$ on $\cD(I)$,  which is equal to $F(I)$ on 
$\cD\setminus \bigcup_{J\in \cH_\lambda} \cD(J)$ and zero in $\bigcup_{J\in \cH_\lambda} \cD(J)$, 
we readily see that $S^\infty_\mu (K, \cD(I)) \le \lambda$ for all $I\in \cD$. 
Therefore, by the definition 
of the outer measure of the super level set  we have
\[
\mu^*(  S^\infty_\mu F >\lambda) \le \mu\{\bigcup_{J\in \cH_\lambda} J\} \le \mu\{x: M_\mu\ut d f(x)>\lambda\}.
\]
Now multiplying both sides by $p\lambda^{p-1}$ and integrating with respect to $d\lambda$ we get
\[
\|F\|\ci{L^2(\cD(I),\mu^*, S^\infty_\mu)}^2
\le \|M\ut d_\mu h\|\ci{L^2(\mu)}^2 \le 4\|h\|\ci{L^2(\mu)}^2\,,
\]
which proves the lemma. 
\end{proof}

Finally, let us estimate the sum $\Sigma_1$:
\begin{align*}
\Sigma_1 & = \sum_{I\in \cD} \bigg|\int_I  \Delta\ci I[(\Delta\ci I^u f) u]\cdot \Delta\ci I[(\Delta\ci I^w g) w]\, d\nu\bigg| 
\\
&=
\sum_{I\in \cD} \bigg|
\int_I  (-\E\ci I[(\Delta\ci I^u f) u] +\sum_{I'\in \ch(I)}\E\ci {I'}[(\Delta\ci I^u f) u])\cdot [(\Delta\ci I^w g) w]\, d\nu\bigg|
\\
&= \sum_{I\in\cD} \Bigg|\int_I  \bigg(\sum_{I'\in \ch(I)}\E\ci{I'}[(\Delta\ci I^u f) u]\bigg)\,(\Delta\ci I^w g) \cdot w\, d\nu\Bigg| 
\\
&\le \sum_{I\in\cD} \Bigg(\int_I  \bigg|\sum_{I'\in \ch(I)}\E\ci {I'}[(\Delta\ci I^u f) u]\bigg|^2 w d\nu\Bigg)^{1/2}\bigg(\int_I\big| \Delta\ci I^w g\big|^2 \cdot w\, d\nu\bigg)^{1/2}
\end{align*}
Since  the intervals $I'\in\cD(I)$ are disjoint, we can estimate
\begin{align*}
\int_I  \bigg|\sum_{I'\in \ch(I)}\E\ci {I'}[(\Delta\ci I^u f) u]\bigg|^2 w d\nu &=\sum_{I'\in \ch(I)}\int_{I'}  (\E\ci {I'}[(\Delta\ci I^u f) u])^2 w d\nu 
\\
&= \sum_{I'\in \ch(I)}\La \Delta\ci I^u f\Ra\ci{I', u}^2\La u\Ra\ci{I'}^2 \La w\Ra\ci{I'} |I'| 
\\
 & \le [w]\ci{A_2}\,  \sum_{I'\in \ch(I)}\La \Delta\ci I^u f\Ra\ci{I', u}^2\,u(I') =[w]\ci{A_2}\, \|\Delta^u\ci I f\|\ci{L^2(u)}^2. 
\end{align*}
Hence,
\begin{align*}
\Sigma_1  & \le [w]\ci{A_2}^{1/2}\sum_{I\in\cD} \|\Delta\ci I^w g\|\ci{L^2(w)} 
\| \Delta\ci I^u f \|\ci{L^2(u)}
\\
&\le [w]\ci{A_2}^{1/2} \bigg(\sum_{I\in\cD} \|\Delta\ci I^w g\|\ci{L^2(w)}^2 \bigg)^{1/2} \bigg( \sum_{I\in\cD} \|\Delta\ci I^u f \|\ci{L^2(u)}^2 \bigg)^{1/2} 
\\
& \le [w]\ci{A_2}^{1/2} \|f\|\ci{L^2(u)}\|g\|\ci{L^2(w)}\ .
\end{align*}

\end{document}